\newlength{\querylen}
\newcommand{\mmp}{\mathbb{P}}
\newcommand{\me}{\mathbb{E}}
\newcommand{\E}{\mathbb{E}}
\newcommand{\mr}{\mathbb{R}}
\newcommand{\mn}{\mathbb{N}}
\DeclareMathOperator{\1}{\mathbbm{1}}
\newtheorem{thm}{Theorem}[section]
\newtheorem{lemma}[thm]{Lemma}
\newtheorem{cor}[thm]{Corollary}
\newtheorem{assertion}[thm]{Proposition}
\theoremstyle{definition}
\theoremstyle{remark}
\newtheorem{rem}[thm]{Remark}
\begin{document}
\title{Renewal theory for iterated perturbed random walks on a general branching process tree: early generations}

\author{Alexander Iksanov\footnote{Faculty of Computer Science and
Cybernetics, Taras Shevchenko National University of Kyiv, Kyiv, Ukraine; \ e-mail: iksan@univ.kiev.ua} \ \ \ Bohdan Rashytov\footnote{Faculty of Computer Science and
Cybernetics, Taras Shevchenko National University of Kyiv, Kyiv, Ukraine; \ e-mail: mr.rashytov@gmail.com} \ \ and \ \ Igor Samoilenko\footnote{Faculty of Computer Science and
Cybernetics, Taras Shevchenko National University of Kyiv, Kyiv, Ukraine; \ e-mail: isamoil@i.ua}}

\maketitle
\begin{abstract}
\noindent Let $(\xi_k,\eta_k)_{k\in\mn}$ be independent identically distributed random vectors with arbitrarily dependent positive components. We call a (globally) perturbed random walk a random sequence $T:=(T_k)_{k\in\mn}$ defined by $T_k:=\xi_1+\cdots+\xi_{k-1}+\eta_k$ for $k\in\mn$. Consider a general branching process generated by $T$ and denote by $N_j(t)$ the number of the $j$th generation individuals with birth times $\leq t$. We treat early generations, that is, fixed generations $j$ which do not depend on $t$. In this setting we prove counterparts for $\mathbb{E}N_j$ of the Blackwell theorem and the key renewal theorem, prove a strong law of large numbers for $N_j$, find the first-order asymptotics for the variance of $N_j$. Also, we prove a functional limit theorem for the vector-valued process $(N_1(ut),\ldots, N_j(ut))_{u\geq 0}$, properly normalized and centered, as $t\to\infty$. The limit is a vector-valued Gaussian process whose components are integrated Brownian motions.
\end{abstract}

\noindent Key words: functional limit theorem; general branching process; key renewal theorem; perturbed random walk; renewal theory; strong law of large numbers

\noindent 2000 Mathematics Subject Classification: Primary: 60K05, 60J80
\section{Introduction}\label{Sect1}

Let $(\xi_i, \eta_i)_{i\in\mn}$ be independent copies of a $\mr^2$-valued random vector $(\xi, \eta)$ with arbitrarily dependent components. Denote by $(S_i)_{i\in\mn_0}$ ($\mn_0:=\mn\cup\{0\}$) the zero-delayed
standard random walk with increments $\xi_i$ for $i\in\mn$, that is, $S_0:=0$ and $S_i:=\xi_1+\ldots+\xi_i$ for $i\in\mn$. Define
\begin{equation*}%\label{perturbed}
T_i:=S_{i-1}+\eta_i,\quad i\in \mn.
\end{equation*}
The sequence $T:=(T_i)_{i\in\mn}$ is called {\it perturbed random walk}.

In the following we assume that $\xi$ and $\eta$ are almost surely (a.s.) positive. Now we define a general branching process generated by $T$. At time $0$ there is one individual, the ancestor. The ancestor produces offspring (the first generation) with birth times given by the points of $T$. The first generation produces the second generation. The shifts of birth times of the second generation individuals with respect to their mothers' birth times are distributed according to copies of $T$, and for different mothers these copies are independent. The second generation produces the
third one, and so on. All individuals act independently of each other. For $t\geq 0$ and $j\in\mn$, denote by $N_j(t)$ the number of the $j$th generation individuals with birth times $\leq t$ and put $V_j(t):=\me N_j(t)$, and $V_j(t):=0$ for $t<0$. Then % $N_1(t)=N(t)$, $V_1(t)=V(t)$ and
\begin{equation}\label{basic1232}
N_j(t)=\sum_{r\geq 1}N^{(r)}_{j-1}(t-T_r)\1_{\{T_r\leq t\}}=\sum_{k\geq 1}N^{(k)}_{1,\,j}(t-T^{(j-1)}_k)\1_{\{T^{(j-1)}_k\leq t\}} ,\quad j\geq 2,\quad t\geq 0,
\end{equation}
where $N_{j-1}^{(r)}(t)$ is the number of successors in the $j$th generation with birth times within $[T_r,t+T_r]$ of the first generation individual with birth time $T_r$; $T^{(j-1)}:=(T^{(j-1)}_k)_{k\geq 1}$ is some enumeration of the birth times in the $(j-1)$th generation; $N_{1,j}^{(k)}(t)$ is the number of children in the $j$th generation with birth times within $[T^{(j-1)}_k,t+T^{(j-1)}_k]$ of the $(j-1)$th generation individual with birth time $T^{(j-1)}_k$. By the branching property, $(N_{j-1}^{(1)}(t))_{t\geq 0}$, $(N_{j-1}^{(2)}(t))_{t\geq 0},\ldots$ are independent copies of $N_{j-1}$ which are also independent of $T$, and $(N_{1,\,j}^{(1)}(t))_{t\geq 0}$, $(N_{1,j}^{(2)}(t))_{t\geq 0},\ldots$ are independent copies of $(N(t))_{t\geq 0}$ which are also independent of $T^{(j-1)}$. In what follows we write $N$ for $N_1$ and $V$ for $V_1$. Passing in \eqref{basic1232} to expectations we infer, for $j\geq 2$ and $t\geq 0$,
\begin{equation}\label{convol}
V_j(t)=V^{\ast(j)}(t)=(V_{j-1}\ast V)(t)=\int_{[0,\,t]} V_{j-1}(t-y){\rm d}V(y)=\int_{[0,\,t]} V(t-y){\rm d}V_{j-1}(y),
\end{equation}
where $V^{\ast(j)}$ is the $j$-fold convolution of $V$ with itself. We call the sequence $\mathcal{T}:=(T^{(j)})_{j\in\mn}$ {\it iterated perturbed random walk on a general branching process tree}.

Following \cite{Bohun etal:2021}, we call the $j$th generation {\it early}, {\it intermediate} or {\it late} depending on whether $j$ is fixed, $j=j(t)\to\infty$ and $j(t)=o(t)$ as $t\to\infty$, or $j=j(t)$ is of order $t$. In the paper \cite{Bohun etal:2021}, to which we refer for the motivation behind the study of $\mathcal{T}$, the authors prove counterparts of the elementary renewal theorem, the Blackwell theorem and the key renewal theorem for some intermediate generations. In the present work we investigate early generations. Although the analysis of early generations is simpler than that of intermediate generations, we solve here a larger collection of problems. More precisely, we prove a strong law of large numbers for $N_j(t)$ and a functional limit theorem for the vector-valued process $(N_1(ut),N_2(ut),\ldots, N_k(ut))_{u\geq 0}$ for each $k\in\mn$, properly normalized and centered as $t\to\infty$; investigate the rate of convergence in a counterpart for $V_j$ of the elementary renewal theorem and find the asymptotics of the variance ${\rm Var}\,N_j(t)$. Also, counterparts for $V_j$ of the Blackwell theorem and the key renewal theorem are given.

The remainder of the paper is structured as follows. In Section \ref{res} we state our main results. Some auxiliary statements are discussed in Section \ref{aux21}. The proofs of the main results  are given in Section \ref{pro}. Finally, the Appendix collects a couple of assertions borrowed from other articles.

\section{Results}\label{res}

\subsection{A counterpart of the elementary renewal theorem and the rate of convergence result}

Proposition \ref{elem} is a counterpart for $V_j=\me N_j$ of the elementary renewal theorem.
\begin{assertion}\label{elem}
Assume that ${\tt m}=\me\xi<\infty$.
Then, for fixed $j\in\mn$,
\begin{equation}\label{elem1}
\lim_{t\to\infty}\frac{V_j(t)}{t^j}=\frac{1}{j!{\tt m}^j}.
\end{equation}
\end{assertion}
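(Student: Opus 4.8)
The plan is to prove \eqref{elem1} by induction on $j$: the case $j=1$ is an analogue of the elementary renewal theorem for the perturbed random walk $T$, and the inductive step is extracted from the convolution identity \eqref{convol}.

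For the base case I would write $U(t):=\sum_{i\geq0}\mmp(S_i\leq t)$ for the renewal function of $(S_i)_{i\geq0}$ and invoke the classical elementary renewal theorem $U(t)\sim t/{\tt m}$ as $t\to\infty$, valid because $\xi>0$ a.s.\ forces ${\tt m}\in(0,\infty)$ under the standing assumption ${\tt m}<\infty$. Since $V(t)=\me N(t)=\sum_{i\geq1}\mmp(S_{i-1}+\eta_i\leq t)$ and the $\eta_i$ are a.s.\ nonnegative, the upper bound $V(t)\leq\sum_{i\geq1}\mmp(S_{i-1}\leq t)=U(t)$ is immediate, giving $\limsup_{t\to\infty}V(t)/t\leq1/{\tt m}$. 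For the lower bound I would fix $a>0$, note that $S_{i-1}$ depends only on $\xi_1,\dots,\xi_{i-1}$ and is therefore independent of $\eta_i$, so $\mmp(S_{i-1}+\eta_i\leq t)\geq\mmp(S_{i-1}\leq t-a)\,\mmp(\eta\leq a)$ and hence $V(t)\geq\mmp(\eta\leq a)\,U(t-a)$; letting first $t\to\infty$ and then $a\to\infty$ (using $\mmp(\eta<\infty)=1$) yields $\liminf_{t\to\infty}V(t)/t\geq1/{\tt m}$. Thus $V(t)\sim t/{\tt m}$, which is \eqref{elem1} for $j=1$.

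For the inductive step, assuming $V_{j-1}(t)\sim t^{j-1}/((j-1)!\,{\tt m}^{j-1})$ with $j\geq2$, I would start from $V_j(t)=\int_{[0,t]}V_{j-1}(t-y)\,{\rm d}V(y)$, see \eqref{convol}. The auxiliary fact I would prove first is the Stieltjes asymptotics $\int_{[0,t]}(t-y)^{j-1}\,{\rm d}V(y)\sim t^j/(j{\tt m})$: integrating by parts (the boundary terms vanish since $V(0)=0$ and $j\geq2$) rewrites the left-hand side as $(j-1)\int_0^t(t-y)^{j-2}V(y)\,{\rm d}y$, into which one substitutes $V(y)\sim y/{\tt m}$ from the base case and evaluates $\int_0^t y(t-y)^{j-2}\,{\rm d}y=t^j/(j(j-1))$. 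Then, given $\ve\in(0,1)$, I would pick $t_0$ with $(1-\ve)s^{j-1}\leq(j-1)!\,{\tt m}^{j-1}V_{j-1}(s)\leq(1+\ve)s^{j-1}$ for $s\geq t_0$, split $\int_{[0,t]}$ at $y=t-t_0$, bound the part over $(t-t_0,t]$ by $V_{j-1}(t_0)\big(V(t)-V(t-t_0)\big)=O(t)=o(t^j)$, and on $[0,t-t_0]$ sandwich $V_{j-1}(t-y)$ between $(1\mp\ve)(t-y)^{j-1}/((j-1)!\,{\tt m}^{j-1})$; combining with the auxiliary asymptotics gives $\frac{1-\ve}{j!\,{\tt m}^j}\leq\liminf_{t\to\infty}t^{-j}V_j(t)\leq\limsup_{t\to\infty}t^{-j}V_j(t)\leq\frac{1+\ve}{j!\,{\tt m}^j}$, and $\ve\downarrow0$ closes the induction.

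The only genuinely delicate point is the passage from $V(y)\sim y/{\tt m}$ to the Stieltjes asymptotics for $\int_{[0,t]}(t-y)^{j-1}\,{\rm d}V(y)$, where one must control the contributions of $y$ near $0$ and near $t$ when turning the Stieltjes integral into an ordinary one; this is routine but should be written carefully. A shorter alternative that bypasses the induction bookkeeping altogether is to pass to Laplace--Stieltjes transforms: the base case together with an Abelian theorem gives $\widehat V(s):=\int_{[0,\infty)}e^{-sy}{\rm d}V(y)\sim1/({\tt m}s)$ as $s\downarrow0$, hence $\widehat{V_j}(s)=\widehat V(s)^j\sim1/({\tt m}^js^j)$, and since $V_j$ is nondecreasing Karamata's Tauberian theorem yields $V_j(t)\sim t^j/(\Gamma(j+1){\tt m}^j)=t^j/(j!\,{\tt m}^j)$.
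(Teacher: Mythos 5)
Your argument is correct, but your main route is genuinely different from the paper's. The paper proves Proposition \ref{elem} in three lines by working with Laplace--Stieltjes transforms from the start: since $V_j=V^{\ast(j)}$ by \eqref{convol}, the transform factorizes exactly, $\int_{[0,\infty)}e^{-st}{\rm d}V_j(t)=\big(\me e^{-s\eta}/(1-\me e^{-s\xi})\big)^j\sim {\tt m}^{-j}s^{-j}$ as $s\to0+$ (using only ${\tt m}<\infty$, via $1-\me e^{-s\xi}\sim{\tt m}s$ and $\me e^{-s\eta}\to1$), and Karamata's Tauberian theorem applied to the nondecreasing function $V_j$ gives \eqref{elem1} at once --- this is essentially the ``shorter alternative'' you sketch in your last sentence, except that the paper does not need the base case plus an Abelian theorem, because the transform is available in closed form. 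Your main proof instead runs by induction: the base case via the sandwich $\mmp\{\eta\leq a\}\,U(t-a)\leq V(t)\leq U(t)$ and the classical elementary renewal theorem, and the inductive step via the convolution identity, the integration-by-parts asymptotics $\int_{[0,t]}(t-y)^{j-1}{\rm d}V(y)\sim t^j/(j{\tt m})$, and an $\ve$-splitting of the integral at $t-t_0$; all the steps check out (the independence of $S_{i-1}$ and $\eta_i$, the vanishing boundary terms since $V(0)=0$ and $j\geq2$, and the $O(t)=o(t^j)$ bound near the upper endpoint are all legitimate), and the point you flag as delicate --- replacing $V(y)$ by $y/{\tt m}$ inside $(j-1)\int_0^t(t-y)^{j-2}V(y)\,{\rm d}y$ --- is indeed routine, e.g.\ by the substitution $y=ts$ and dominated convergence using $V(y)\leq U(y)\leq C(1+y)$. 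What each approach buys: yours is elementary and self-contained, avoids Tauberian machinery, and produces reusable intermediate estimates of the type the paper later needs anyway (compare Lemma \ref{aux123} and the splitting arguments in the proofs of Theorems \ref{rate} and \ref{blackwell_early}); the paper's transform argument is far shorter and exploits the exact multiplicative structure of $V_j$, at the price of invoking Theorem 1.7.1 of \cite{Bingham+Goldie+Teugels:1989}.
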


Theorem \ref{rate} quantifies the rate of convergence in \eqref{elem1} under the assumptions $\me\xi^2<\infty$ and $\me\eta<\infty$. As usual, $f(t)\sim g(t)$ as $t\to\infty$ means that $\lim_{t\to\infty}(f(t)/g(t))=1$.
\begin{thm}\label{rate}
Assume that the distribution of $\xi$ is nonlattice, and that $\me\xi^2<\infty$ and $\me\eta<\infty$. Then, for any fixed $j\in\mn$,
\begin{equation}\label{asymp1}
V_j(t)-\frac{t^j}{j!{\tt m}^j}~\sim~\frac{b_Vjt^{j-1}}{(j-1)!{\tt m}^{j-1}},\quad t\to\infty,
\end{equation}
where ${\tt m}=\me\xi<\infty$ and $b_V:={\tt m}^{-1}(\me \xi^2/(2{\tt m})-\me \eta)\in\mr$.
\end{thm}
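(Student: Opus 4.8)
The plan is to argue by induction on $j$. The base case $j=1$ is the classical refinement of the elementary renewal theorem: for a nonlattice perturbed random walk with $\me\xi^2<\infty$ and $\me\eta<\infty$ one has $V(t)=t/{\tt m}+b_V+o(1)$ as $t\to\infty$, where $b_V={\tt m}^{-1}(\me\xi^2/(2{\tt m})-\me\eta)$. (This is a standard fact obtained by applying the key renewal theorem to $V-(\cdot)/{\tt m}$, or can be quoted from the literature on perturbed random walks; I would cite it as an auxiliary statement.) In particular \eqref{asymp1} for $j=1$ reads $V(t)-t/{\tt m}\sim b_V$, which is exactly this expansion.

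For the inductive step I would use the convolution identity \eqref{convol}, namely $V_{j}=V_{j-1}\ast V$, together with the inductive hypothesis written in the sharper additive form
\begin{equation*}
V_{j-1}(t)=\frac{t^{j-1}}{(j-1)!{\tt m}^{j-1}}+\frac{b_V(j-1)t^{j-2}}{(j-2)!{\tt m}^{j-2}}+o(t^{j-2}),\quad t\to\infty.
\end{equation*}
Writing $V_j(t)=\int_{[0,t]}V_{j-1}(t-y)\,{\rm d}V(y)$ and substituting the expansion of $V_{j-1}$, the leading term contributes $\int_{[0,t]}(t-y)^{j-1}/((j-1)!{\tt m}^{j-1})\,{\rm d}V(y)$. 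Using $V(y)=y/{\tt m}+b_V+o(1)$ and integrating by parts (or convolving the polynomial $t^{j-1}/(j-1)!$ with the measure ${\rm d}V$) produces both the main term $t^j/(j!{\tt m}^j)$ and a correction of order $t^{j-1}$ whose coefficient combines $b_V$ from $V$ with the $b_V(j-1)$ coefficient inherited from $V_{j-1}$; the lower-order piece of $V_{j-1}$ convolved with the leading $1/{\tt m}$ part of ${\rm d}V$ supplies the complementary share. Collecting the $t^{j-1}$ coefficients should give exactly $b_V j/((j-1)!{\tt m}^{j-1})$, as claimed, while the remaining convolutions (lower-order times lower-order, and everything against the $o(1)$ remainders) are $o(t^{j-1})$.

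The cleanest bookkeeping device, which I would actually use to keep the error control transparent, is to write $V=U+R$ where $U(t):=(t/{\tt m}+b_V)\1_{\{t\ge0\}}$ is the "smooth part" and $R(t):=V(t)-U(t)\to0$; then $V_j=V^{\ast j}=(U+R)^{\ast j}$ expands into $2^j$ terms, and one shows by a direct estimate that any term containing at least one factor $R$ is $o(t^{j-1})$, using boundedness of $R$, the bound $V(t)=O(t)$, and dominated-convergence-type arguments for the convolution $\int R(t-y)\,{\rm d}(\text{polynomial in }y)$. The surviving term $U^{\ast j}$ is an explicit convolution of the elementary functions $t/{\tt m}+b_V$, computable in closed form: its top two coefficients are precisely $t^j/(j!{\tt m}^j)$ and $b_V j t^{j-1}/((j-1)!{\tt m}^{j-1})$. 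The main obstacle is the error analysis for the mixed terms — in particular controlling $\int_{[0,t]}R(t-y)\,{\rm d}V_{i}(y)$ uniformly, where $R$ vanishes at infinity but $V_i$ grows polynomially; this requires splitting the integral at, say, $y\in[0,t-A]$ versus $y\in(t-A,t]$, using $R\to0$ on the first part together with $V_i(t)=O(t^i)$, and using boundedness of $R$ together with the increment bound $V_i(t)-V_i(t-A)=O(At^{i-1})$ on the second, then letting $A\to\infty$ slowly. Everything else is a routine, if slightly tedious, induction on the polynomial convolution identities.
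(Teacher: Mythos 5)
Your argument is correct in substance and rests on the same two pillars as the paper's proof: the one-dimensional expansion $V(t)={\tt m}^{-1}t+b_V+o(1)$ (which the paper derives from the decomposition \eqref{I}, dominated convergence via Lorden's inequality, and $U(t)-{\tt m}^{-1}t\to \me\xi^2/(2{\tt m}^2)$), and a split of the convolution integral at $t-t_0$ to play the vanishing remainder against the polynomially growing measure, with fixed-window increments of $V_k$ controlled through Proposition \ref{elem} (or Theorem \ref{blackwell_early}, whose proof does not use Theorem \ref{rate}, so no circularity). Where you differ is the bookkeeping. The paper runs a one-step induction via the exact identity $V_{k+1}(t)-\frac{t^{k+1}}{(k+1)!{\tt m}^{k+1}}=\int_{[0,t]}\big(V(t-y)-{\tt m}^{-1}(t-y)\big){\rm d}V_k(y)+{\tt m}^{-1}\int_0^t\big(V_k(y)-\frac{y^k}{k!{\tt m}^k}\big){\rm d}y$, handling the first integral by the $t_0$-split and the Lorden bound \eqref{lord2}, and the second by L'H\^{o}pital plus the induction hypothesis, so each step produces only two error terms. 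You instead expand $V^{\ast(j)}=(U+R)^{\ast(j)}$ around the smooth part $U(t)=({\tt m}^{-1}t+b_V)\1_{[0,\,\infty)}(t)$, compute $U^{\ast(j)}$ in closed form (its top two coefficients are indeed $\frac{t^j}{j!{\tt m}^j}+\frac{b_Vjt^{j-1}}{(j-1)!{\tt m}^{j-1}}$, e.g.\ via Laplace transforms), and discard the mixed terms; this avoids induction and yields the full additive expansion at once, at the price of estimating $2^j-1$ mixed terms. One caution: for mixed terms containing two or more factors $R$, the crude bounds you invoke (boundedness of $R$ and $V(t)=O(t)$) give only $O(t^{j-1})$, not $o(t^{j-1})$; you must run the same split-at-$t-A$ argument you describe for $\int_{[0,t]}R(t-y)\,{\rm d}V_i(y)$, after dominating $|{\rm d}R|$ by ${\rm d}V+{\tt m}^{-1}{\rm d}y+|b_V|\delta_0$, so that the total variation of the remaining $(j-1)$-fold convolution over $(t-A,t]$ is $o(t^{j-1})$ (which again follows from Proposition \ref{elem}-type asymptotics, exactly as the paper treats its boundary terms $B_j$ and $I_2$). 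With that detail spelled out, your route goes through and delivers the same expansion.
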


\subsection{Counterparts of the key renewal theorem and the Blackwell theorem}

Theorem \ref{keyren} is a counterpart for $V_j$ of the key renewal theorem. Recall that the distribution of a positive random variable is called {\it nonlattice} if it is not concentrated on any lattice $(nd)_{n\in\mn_0}$, $d>0$.
\begin{thm}\label{keyren}
Let $f: [0,\infty)\to [0,\infty)$ be a directly Riemann integrable function on $[0,\infty)$. Assume that the distribution of $\xi$ is nonlattice and that ${\tt m}<\infty$. Then, for fixed $j\in\mn$,
\begin{equation}\label{equival}
\int_{[0,\,t]}f(t-y){\rm d}V_j(y)~\sim~ \Big(\frac{1}{{\tt m}}\int_0^\infty f(y){\rm d}y\Big) V_{j-1}(t)~\sim~ \int_0^\infty f(y){\rm d}y\frac{t^{j-1}}{(j-1)!{\tt m}^j},\quad t\to\infty.
\end{equation}
\end{thm}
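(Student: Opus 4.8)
The plan is to deduce \eqref{equival} from two ingredients: first, the case $j=1$, which is exactly the key renewal theorem for the perturbed renewal function $V=\me N$; and second, an elementary observation that integrating a bounded function with a finite limit at infinity against ${\rm d}V_{j-1}$ is, to leading order, the same as multiplying its limit by $V_{j-1}(t)$. No genuine induction is needed beyond the base case. Throughout one uses that $V$ is nondecreasing, right-continuous and finite (indeed $V(t)\leq U(t)<\infty$, where $U:=\sum_{n\geq 0}F^{\ast n}$ is the renewal function of $(S_i)$ and $F$ is the law of $\xi$), so that ${\rm d}V_j$ is a locally finite measure for each $j$, and that $V_0\equiv 1$ on $[0,\infty)$, consistently with \eqref{convol}.

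First I would prove the case $j=1$. Writing $G$ for the law of $\eta$, one has $V=U\ast G$, so by Fubini's theorem (all integrands being nonnegative)
\begin{equation*}
\int_{[0,\,t]}f(t-y)\,{\rm d}V(y)=\int_{[0,\,t]}h(t-s)\,{\rm d}G(s),\qquad h(x):=\int_{[0,\,x]}f(x-u)\,{\rm d}U(u),\quad x\geq 0 .
\end{equation*}
Since $\xi$ is nonlattice, ${\tt m}<\infty$ and $f$ is directly Riemann integrable, the classical key renewal theorem gives $h(x)\to c:={\tt m}^{-1}\int_0^\infty f(y)\,{\rm d}y$ as $x\to\infty$; moreover $H:=\sup_{x\geq 0}h(x)<\infty$, a standard consequence of the direct Riemann integrability of $f$ and the uniform boundedness of the increments of $U$. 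Because $\eta<\infty$ a.s., $h(t-s)\1_{\{s\leq t\}}\to c$ for $G$-almost every $s$, and the bounded convergence theorem (with respect to the probability measure $G$) yields $\int_{[0,\,t]}f(t-y)\,{\rm d}V(y)\to c$, which is \eqref{equival} for $j=1$ since $V_0\equiv 1$.

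For $j\geq 2$ I would invoke the convolution identity $V_j=V\ast V_{j-1}$ from \eqref{convol} and Fubini's theorem once more to write
\begin{equation*}
\int_{[0,\,t]}f(t-y)\,{\rm d}V_j(y)=\int_{[0,\,t]}g(t-s)\,{\rm d}V_{j-1}(s),\qquad g(x):=\int_{[0,\,x]}f(x-u)\,{\rm d}V(u),\quad x\geq 0 .
\end{equation*}
By the case $j=1$ just proved, $g(x)\to c$ as $x\to\infty$, and $g$ is bounded, since $g(x)=\int_{[0,\,x]}h(x-s)\,{\rm d}G(s)\leq H<\infty$. Given $\ve>0$, choose $A$ with $|g(x)-c|\leq\ve$ for all $x\geq A$; splitting the integral at $s=t-A$ (for $t>A$) gives
\begin{equation*}
\Big|\int_{[0,\,t]}g(t-s)\,{\rm d}V_{j-1}(s)-c\,V_{j-1}(t)\Big|\leq\ve\,V_{j-1}(t)+(c+H)\big(V_{j-1}(t)-V_{j-1}(t-A)\big).
\end{equation*}
By Proposition \ref{elem} applied with $j-1\in\mn$, one has $V_{j-1}(t)\sim t^{j-1}/((j-1)!\,{\tt m}^{j-1})\to\infty$ and $V_{j-1}(t)-V_{j-1}(t-A)=o(t^{j-1})=o(V_{j-1}(t))$ for each fixed $A$. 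Dividing by $V_{j-1}(t)$, letting $t\to\infty$ and then $\ve\downarrow 0$ shows $\int_{[0,\,t]}f(t-y)\,{\rm d}V_j(y)\sim c\,V_{j-1}(t)$, which is the first $\sim$ in \eqref{equival}; the second $\sim$ then follows from Proposition \ref{elem}.

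The argument has no serious obstacle; the two points deserving care are: (a) the base case $j=1$, where the $\eta$-perturbation is absorbed by the bounded convergence step and where one relies on the standard boundedness of $x\mapsto\int_{[0,x]}f(x-u)\,{\rm d}U(u)$ for directly Riemann integrable $f$ (equivalently, the uniform boundedness of the increments of $U$); and (b) the fact that $V_{j-1}(t-A)\sim V_{j-1}(t)$, which is immediate from the polynomial asymptotics of Proposition \ref{elem}. If one prefers, the case $j=1$ can simply be quoted from the renewal theory of perturbed random walks, in which case the proof reduces to the one-paragraph truncation argument above.
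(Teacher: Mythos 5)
Your argument is correct, and it is a genuinely different (and self-contained) route: the paper does not prove Theorem \ref{keyren} in situ but simply notes that the proof of Theorem 2.7(a) in \cite{Bohun etal:2021} (the intermediate-generations setting, where extra moment assumptions are needed) ``applies with obvious simplifications''. You instead give a direct proof for fixed $j$: the base case $j=1$ via the factorization $V=U\ast G$, the classical key renewal theorem for $h=f\ast U$, the uniform bound $\sup_x h(x)<\infty$ coming from direct Riemann integrability and subadditivity of $U$, and bounded convergence in the $G$-variable; then, for $j\geq 2$, the convolution identity \eqref{convol} plus a one-step truncation at $t-A$, using only Proposition \ref{elem} both for $V_{j-1}(t)\sim t^{j-1}/((j-1)!{\tt m}^{j-1})$ and for the negligibility $V_{j-1}(t)-V_{j-1}(t-A)=o(V_{j-1}(t))$ (a weak Blackwell-type statement that is trivial here because of the polynomial growth, which is precisely what makes the early-generation case easy and lets you avoid any induction or extra hypotheses beyond ${\tt m}<\infty$ and nonlattice $\xi$). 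What each approach buys: the paper's citation is shortest and ties the result to the more delicate intermediate-generation machinery, while your proof keeps everything inside the present paper, makes transparent why no assumptions beyond those stated are needed, and correctly handles the convention $V_0\equiv 1$ for the $j=1$ case. Two cosmetic remarks: as you note, when $\int_0^\infty f(y){\rm d}y=0$ the relation \eqref{equival} should be read as the limit statement your proof actually establishes (this is an interpretation issue of the statement itself, not of your argument), and in your displayed truncation bound the first term can of course be taken as $\varepsilon V_{j-1}(t-A)\leq\varepsilon V_{j-1}(t)$, exactly as you use it.
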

Theorem \ref{blackwell_early}, a counterpart for $V_j$ of the Blackwell theorem, is just a specialization of Theorem \ref{keyren} with $f(y)=\1_{[0,\,h)}(y)$ for $y\geq 0$. Nevertheless,
we find it instructive to provide an alternative proof. The reason is that the proof given in Section \ref{pro1} illustrates nicely basic concepts of the renewal theory and may be adapted to other settings.
\begin{thm}\label{blackwell_early}
Assume that the distribution of $\xi$ is nonlattice and ${\tt m}=\me\xi<\infty$. Then, for fixed $j\in\mn$ and fixed $h>0$,
\begin{equation}\label{bla_early}
V_j(t+h)-V_j(t)~\sim~\frac{ht^{j-1}}{(j-1)!{\tt m}^j},\quad t\to\infty.
\end{equation}
\end{thm}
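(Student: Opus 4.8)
The plan is to reduce \eqref{bla_early} to its case $j=1$, which is essentially the classical Blackwell theorem, and then to bootstrap to arbitrary $j\ge 2$ by means of the convolution identity \eqref{convol} together with the first-order asymptotics supplied by Proposition \ref{elem}. Write $U(t):=\sum_{n\ge 0}\mmp(S_n\le t)$ for the renewal function of the standard random walk $(S_n)_{n\in\mn_0}$ and let $G$ denote the distribution function of $\eta$.

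\emph{Step 1 (the case $j=1$).} Since $S_{i-1}$ is independent of $\eta_i$, one has $\mmp(T_i\le t)=\int_{[0,\,t]}\mmp(S_{i-1}\le t-y)\,{\rm d}G(y)$ for every $i\in\mn$, and summation over $i\ge 1$ gives the representation $V=U\ast G$. Hence
\begin{equation*}
V(t+h)-V(t)=\int_{[0,\,\infty)}\big(U(t+h-y)-U(t-y)\big)\,{\rm d}G(y).
\end{equation*}
Because the distribution of $\xi$ is nonlattice and ${\tt m}<\infty$, the classical Blackwell theorem yields $U(t+h-y)-U(t-y)\to h/{\tt m}$ as $t\to\infty$ for each fixed $y\ge 0$, while $U(s+h)-U(s)\le U(h)<\infty$ for all $s\in\mr$ is a standard property of renewal functions. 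As $G$ is a probability distribution function, dominated convergence gives $V(t+h)-V(t)\to h/{\tt m}$, which is \eqref{bla_early} for $j=1$.

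\emph{Step 2 (the case $j\ge 2$).} Writing \eqref{convol} as $V_j(t)=\int_{[0,\,t]}V(t-y)\,{\rm d}V_{j-1}(y)$ and splitting the integral representing $V_j(t+h)$ at $y=t$, we obtain
\begin{equation*}
V_j(t+h)-V_j(t)=\int_{[0,\,t]}\big(V(t+h-y)-V(t-y)\big)\,{\rm d}V_{j-1}(y)+\int_{(t,\,t+h]}V(t+h-y)\,{\rm d}V_{j-1}(y).
\end{equation*}
Since $0\le t+h-y<h$ on $(t,t+h]$, the last summand is at most $V(h)\,\big(V_{j-1}(t+h)-V_{j-1}(t)\big)=o(t^{j-1})$ by Proposition \ref{elem}. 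In the first summand, fix $\varepsilon>0$ and choose $M>0$ so large that $|V(s+h)-V(s)-h/{\tt m}|<\varepsilon$ for all $s\ge M$ (possible by Step 1). On $y\in(t-M,\,t]$ the integrand is bounded by $V(M+h)$, so that part of the first summand is at most $V(M+h)\,\big(V_{j-1}(t)-V_{j-1}(t-M)\big)=o(t^{j-1})$, again by Proposition \ref{elem}. On $y\in[0,\,t-M]$ we have $t-y\ge M$, so that part differs from $(h/{\tt m})\,V_{j-1}(t-M)$ by at most $\varepsilon\,V_{j-1}(t-M)$. Combining these bounds with $V_{j-1}(t-M)\sim V_{j-1}(t)\sim t^{j-1}/((j-1)!\,{\tt m}^{j-1})$ from Proposition \ref{elem}, we get
\begin{equation*}
\limsup_{t\to\infty}\,\Big|\frac{(j-1)!\,{\tt m}^{j-1}\big(V_j(t+h)-V_j(t)\big)}{t^{j-1}}-\frac{h}{{\tt m}}\Big|\le\varepsilon,
\end{equation*}
and letting $\varepsilon\downarrow 0$ yields \eqref{bla_early}.

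The main obstacle is that the limit relation $V(s+h)-V(s)\to h/{\tt m}$ obtained in Step 1 is not uniform, yet it has to be integrated against the diffuse, polynomially growing measure ${\rm d}V_{j-1}$; the truncation at level $t-M$ is precisely the device that turns pointwise convergence into the sought asymptotics, with Proposition \ref{elem} guaranteeing both the negligibility of the two boundary contributions and the equivalence $V_{j-1}(t-M)\sim V_{j-1}(t)$. A subsidiary point is the base case, where one relies on the classical Blackwell theorem for the genuine renewal function $U$ (applicable since $\xi$ is nonlattice) and on the standard bound $U(s+h)-U(s)\le U(h)$ to justify the passage to the limit under the integral sign.
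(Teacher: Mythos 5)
Your proposal is correct and takes essentially the same route as the paper: the identical decomposition of $V_j(t+h)-V_j(t)$ into the convolution increment over $[0,\,t]$ plus the boundary term over $(t,\,t+h]$, the same truncation of the first integral at $t-M$ (the paper's $t-t_0$), and the same appeal to Proposition \ref{elem} to kill the boundary pieces and identify the limit. The only deviation is the base case $j=1$, which you prove directly via $V=U\ast G$, the classical Blackwell theorem and dominated convergence with the bound $U(s+h)-U(s)\le U(h)$, whereas the paper simply cites Lemma 4.2(a) of \cite{Bohun etal:2021}; your self-contained argument there is valid.
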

\begin{rem}
In the case $\eta=0$ a.s.\ limit relation \eqref{bla_early} can be found in Theorem 1.16 of \cite{Mitov+Omey:2014}.
\end{rem}

\subsection{Asymptotics of the variance}

In this section we find, for fixed $j\in\mn$, the asymptotics of ${\rm Var}\,N_j(t)$ as $t\to\infty$ under the assumption $\eta=\xi$ a.s., so that $T_k=S_k$ for $k\in\mn$. In other words, below we treat {\it iterated standard random walks}. Theorem \ref{thm:variance} is a strengthening of Lemma 4.2 in \cite{Iksanov+Kabluchko:2018} in which the big O estimate for ${\rm Var}\,N_j(t)$ was obtained, rather than precise asymptotics. We do not know the asymptotic behavior of ${\rm Var}\,N_j(t)$ for (genuine) iterated perturbed random walks.
\begin{thm}\label{thm:variance}
Assume that $\eta=\xi$ a.s., that the distribution of $\xi$ is nonlattice and ${\tt s}^2:={\rm Var}\,\xi\in (0, \infty)$. Then, for any $j\in\mn$,
\begin{equation}\label{ineq3}
\lim_{t\to\infty} \frac{{\rm Var}\,N_j(t)}{t^{2j-1}}=\frac{{\tt s}^2}{(2j-1)((j-1)!)^2 {\tt m}^{2j+1}},
\end{equation}
where ${\tt m}=\me\xi<\infty$.
\end{thm}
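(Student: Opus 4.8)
The plan is to set up a recursion for the variance using the decomposition \eqref{basic1232} with $\eta=\xi$, so $T^{(j-1)}_k=S^{(j-1)}_k$ are the points of the $(j-1)$st iterated standard random walk, and $N_j(t)=\sum_k N^{(k)}_{1,j}(t-S^{(j-1)}_k)\1_{\{S^{(j-1)}_k\leq t\}}$, where the $(N^{(k)}_{1,j})$ are i.i.d.\ copies of $N=N_1$ independent of the $(j-1)$st generation. Conditioning on the $\sigma$-algebra $\mathcal{G}_{j-1}$ generated by the first $j-1$ generations, one has $\me[N_j(t)\mid\mathcal{G}_{j-1}]=\sum_k V(t-S^{(j-1)}_k)\1_{\{S^{(j-1)}_k\leq t\}}$ and, by conditional independence, $\mathrm{Var}[N_j(t)\mid\mathcal{G}_{j-1}]=\sum_k \mathrm{Var}\,N(t-S^{(j-1)}_k)\1_{\{S^{(j-1)}_k\leq t\}}$. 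Applying the total variance formula gives
\begin{equation}\label{varrec}
\mathrm{Var}\,N_j(t)=\me\Big[\sum_{k\geq 1}\mathrm{Var}\,N(t-S^{(j-1)}_k)\1_{\{S^{(j-1)}_k\leq t\}}\Big]+\mathrm{Var}\Big[\sum_{k\geq 1}V(t-S^{(j-1)}_k)\1_{\{S^{(j-1)}_k\leq t\}}\Big].
\end{equation}
The first summand equals $\int_{[0,t]}\mathrm{Var}\,N(t-y)\,{\rm d}V_{j-1}(y)$; since $\mathrm{Var}\,N(t)=\mathrm{Var}\,N_1(t)\sim {\tt s}^2 t/{\tt m}^3$ by the classical renewal theorem (the base case $j=1$), and $V_{j-1}(y)\sim y^{j-1}/((j-1)!{\tt m}^{j-1})$ by Proposition \ref{elem}, a Riemann-integral-type asymptotic (in the spirit of Theorem \ref{keyren}, applied after truncating $\mathrm{Var}\,N$ suitably) shows the first summand is $O(t^j)$, hence negligible compared with $t^{2j-1}$ for $j\geq 2$. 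For $j=1$ the statement \eqref{ineq3} is exactly the known first-order asymptotics of the variance of a renewal counting function, which I would cite or quickly recall; so from now on assume $j\geq 2$ and focus on the second term in \eqref{varrec}.

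The second term is the variance of $\Phi_{j-1}(t):=\sum_k V(t-S^{(j-1)}_k)\1_{\{S^{(j-1)}_k\leq t\}}=\int_{[0,t]}V(t-y)\,{\rm d}N_{j-1}(y)$, a functional of the $(j-1)$st iterated walk alone; this is where the real work lies and where I expect to run an induction on $j$. The heuristic is that $V(t-y)\approx (t-y)/{\tt m}$ is, to leading order, linear, so $\Phi_{j-1}(t)\approx {\tt m}^{-1}\int_{[0,t]}(t-y)\,{\rm d}N_{j-1}(y)={\tt m}^{-1}\int_0^t N_{j-1}(y)\,{\rm d}y$ (integration by parts), i.e.\ $\Phi_{j-1}(t)$ behaves like ${\tt m}^{-1}$ times the integrated process $\int_0^t N_{j-1}(y)\,{\rm d}y$. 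This suggests that the dominant fluctuations of $N_j$ come from integrating the fluctuations of $N_{j-1}$, matching the ``integrated Brownian motion'' picture advertised in the abstract. Concretely I would write $V(t-y)={\tt m}^{-1}(t-y)+(V(t-y)-{\tt m}^{-1}(t-y))$ and control the error term: by Theorem \ref{rate} (with $j=1$), $V(s)-s/{\tt m}\to b_V$ as $s\to\infty$, so the error contributes $\int_{[0,t]}(V(t-y)-{\tt m}^{-1}(t-y))\,{\rm d}N_{j-1}(y)\approx b_V N_{j-1}(t)$ up to lower-order pieces, and $\mathrm{Var}$ of that is of order $(t^{j-1})^2=t^{2j-2}$, again negligible against $t^{2j-1}$. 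Hence
\[
\mathrm{Var}\,N_j(t)\sim \mathrm{Var}\Big[{\tt m}^{-1}\int_0^t N_{j-1}(y)\,{\rm d}y\Big]={\tt m}^{-2}\int_0^t\!\!\int_0^t \mathrm{Cov}(N_{j-1}(u),N_{j-1}(v))\,{\rm d}u\,{\rm d}v.
\]

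To finish I need the first-order asymptotics of $\mathrm{Cov}(N_{j-1}(u),N_{j-1}(v))$; by the induction hypothesis $\mathrm{Var}\,N_{j-1}(t)\sim c_{j-1}t^{2j-3}$ with $c_{j-1}={\tt s}^2/((2j-3)((j-2)!)^2{\tt m}^{2j-1})$, and a companion estimate — obtained the same way, or via the Cauchy--Schwarz/Hölder-type bound plus a scaling argument — gives $\mathrm{Cov}(N_{j-1}(us),N_{j-1}(vs))\sim c_{j-1}s^{2j-3}\,g_{j-1}(u,v)$ for an explicit homogeneous kernel $g_{j-1}$; the cleanest route is to first establish, as a lemma (and this is essentially the content needed for the functional limit theorem too), that $(N_{j-1}(us)-\me N_{j-1}(us))/s^{j-3/2}$ converges to a centered Gaussian process $G_{j-1}$ whose covariance is computed by induction, so that $g_{j-1}(u,v)=\me[G_{j-1}(u)G_{j-1}(v)]$. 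Plugging $\mathrm{Cov}(N_{j-1}(u),N_{j-1}(v))\approx c_{j-1}\,g_{j-1}(u,v)$ (with $g_{j-1}$ homogeneous of degree $2j-3$) into the double integral and rescaling $u=ts$, $v=tr$ yields $\mathrm{Var}\,N_j(t)\sim {\tt m}^{-2}c_{j-1}t^{2j-1}\int_0^1\!\!\int_0^1 g_{j-1}(s,r)\,{\rm d}s\,{\rm d}r$; identifying the resulting constant with ${\tt s}^2/((2j-1)((j-1)!)^2{\tt m}^{2j+1})$ is a bookkeeping check (the extra factor ${\tt m}^{-2}$, the drop $(2j-3)\mapsto(2j-1)$ and $(j-2)!\mapsto(j-1)!$ all appear precisely from one integration). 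The main obstacle, and where I would spend the most care, is making the heuristic replacements rigorous: (i) justifying that replacing $V(t-y)$ by its linear part is legitimate inside the variance — this requires uniform-in-$t$ moment bounds such as $\me N_{j-1}(t)^2=O(t^{2j-2})$ and a dominated-convergence argument for the double integral; and (ii) establishing the covariance asymptotics for $N_{j-1}$ uniformly enough to integrate, which is most naturally packaged as the Gaussian functional limit theorem proved elsewhere in the paper and invoked here. Everything else is an induction whose base case $j=1$ is the classical renewal-theoretic variance asymptotics.
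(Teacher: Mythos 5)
Your reduction is set up the wrong way round for an induction on the variance alone, and this is where the argument has a genuine gap. By decomposing over the \emph{last} generation, your dominant term is $\mathrm{Var}\big[\sum_k V(t-S^{(j-1)}_k)\1_{\{S^{(j-1)}_k\leq t\}}\big]$, which after linearizing $V$ becomes ${\tt m}^{-2}\int_0^t\int_0^t \mathrm{Cov}(N_{j-1}(u),N_{j-1}(v))\,{\rm d}u\,{\rm d}v$. To evaluate this you need the full two-point covariance asymptotics of $N_{j-1}$, which is strictly more than your induction hypothesis (the variance asymptotics), so the induction does not close as stated. Your proposed source for the covariance --- the functional limit theorem --- only gives weak convergence of $(N_{j-1}(us)-V_{j-1}(us))/s^{j-3/2}$; without a uniform integrability argument for the squares (e.g.\ uniform $2+\varepsilon$ moment bounds), distributional convergence does not yield convergence of second mixed moments. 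This is precisely the difficulty the paper flags in the remark after the theorem: Fatou's lemma plus the one-dimensional CLT gives only the $\liminf$ bound, and ``the complete proof requires some efforts.'' The Cauchy--Schwarz alternative you mention gives only an upper bound on covariances, not asymptotics, so it does not repair this.

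The paper avoids the issue by decomposing over the \emph{first} generation: $N_j(t)-V_j(t)=N_{j,1}(t)+N_{j,2}(t)$ with $N_{j,2}(t)=\sum_{r\geq 1}V_{j-1}(t-S_r)\1_{\{S_r\leq t\}}-V_j(t)$ a functional of the single renewal process $(S_r)$ alone. Its second moment is then computed \emph{exactly} via the renewal identity $\E\big(\sum_r V_{j-1}(t-S_r)\1_{\{S_r\leq t\}}\big)^2=2\int_{[0,t]}V_{j-1}(t-y)V_j(t-y)\,{\rm d}\tilde U(y)+\int_{[0,t]}V_{j-1}^2(t-y)\,{\rm d}\tilde U(y)$ (formula (4.9) of Iksanov--Kabluchko), combined with the second-order expansions of Theorem \ref{rate} and Lemma \ref{aux123}; the leading $t^{2j}$ and $b_V t^{2j-1}$ terms cancel against $V_j^2(t)$, leaving the stated constant. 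The recursive piece is $\E(N_{j,1}(t))^2=\int_{[0,t]}D_{j-1}(t-y)\,{\rm d}\tilde U(y)=O(t^{2j-2})$, so only the inductive \emph{variance} hypothesis is needed and the induction closes. If you want to salvage your route, you would have to either strengthen the induction hypothesis to covariance asymptotics and prove it by direct moment computations (essentially redoing the paper's calculation), or prove the uniform integrability needed to upgrade the FLT to moment convergence; as written, the key analytic step is missing. The surrounding reductions in your proposal (negligibility of $\int_{[0,t]}\mathrm{Var}\,N(t-y)\,{\rm d}V_{j-1}(y)=O(t^j)$ and of the Lorden-bounded error $c_V N_{j-1}(t)$, via $\E N_{j-1}(t)^2=O(t^{2j-2})$ and Cauchy--Schwarz for the cross term) are fine.
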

\begin{rem}
In view of Corollary \ref{cor onedim} the result of Theorem \ref{thm:variance} is expected, yet the complete proof requires some efforts. Of course, the relation $${\lim\inf}_{t\to\infty} \frac{{\rm Var}\,N_j(t)}{t^{2j-1}}\geq \frac{{\tt s}^2}{(2j-1)((j-1)!)^2{\tt m}^{2j+1}}$$ is an immediate consequence of \eqref{eq:onedim} and Fatou's lemma.
\end{rem}

\subsection{Strong law of large numbers}

Theorem \ref{strong} is a strong law of large numbers for $N_j$.
\begin{thm}\label{strong}
Assume that ${\tt m}=\me\xi<\infty$.
Then, for fixed $j\in\mn$,
\begin{equation}\label{slln}
\lim_{t\to\infty}\frac{N_j(t)}{t^j}=\frac{1}{{\tt m}^j j!}\quad\text{{\rm a.s.}}
\end{equation}
\end{thm}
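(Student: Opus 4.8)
The plan is to prove \eqref{slln} by induction on $j$, using the representation \eqref{basic1232} together with the already established mean asymptotics of Proposition \ref{elem} (and its companion Blackwell/key-renewal estimates). The base case $j=1$ is the strong law of large numbers for the renewal-type counting process $N(t)=N_1(t)=\sum_{k\geq 1}\1_{\{T_k\leq t\}}$ associated with the perturbed random walk $T$. Here $T_k=\xi_1+\cdots+\xi_{k-1}+\eta_k$, and since $T_k/k\to {\tt m}$ a.s.\ by the classical SLLN applied to $S_{k-1}/k$ together with $\eta_k/k\to 0$ a.s.\ (which follows from $\me\eta$ being finite in the relevant cases, or more generally from Borel--Cantelli once we note only $\me\xi<\infty$ is assumed — so one actually argues via $N(t)$ directly rather than through $\eta_k$), a standard inversion argument gives $N(t)/t\to 1/{\tt m}$ a.s. One should be slightly careful that only ${\tt m}=\me\xi<\infty$ is assumed and nothing about $\eta$; the cleanest route is to sandwich $N(t)$ between two ordinary renewal processes or to invoke the known SLLN for perturbed random walks (e.g.\ from Iksanov's monograph), which holds under exactly ${\tt m}<\infty$.

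For the inductive step, assume \eqref{slln} holds for $j-1$, i.e.\ $N_{j-1}(t)/t^{j-1}\to 1/({\tt m}^{j-1}(j-1)!)$ a.s. The idea is to split the first equality in \eqref{basic1232},
\[
N_j(t)=\sum_{r\geq 1}N^{(r)}_{j-1}(t-T_r)\1_{\{T_r\leq t\}},
\]
into a ``typical'' part and a negligible remainder. Fix a small $\ve\in(0,1)$. On the event $\{T_r\leq (1-\ve)t\}$ we have $t-T_r$ large (of order $t$), so by the induction hypothesis each $N^{(r)}_{j-1}(t-T_r)$ is close to $(t-T_r)^{j-1}/({\tt m}^{j-1}(j-1)!)$; summing over $r$ and using that $\sum_r (t-T_r)^{j-1}\1_{\{T_r\leq (1-\ve)t\}}$ behaves, after dividing by $t^j$, like $\tfrac{1}{{\tt m}}\int_0^{(1-\ve)t}(t-y)^{j-1}\,{\rm d}y \big/ t^j \to \tfrac{1}{j{\tt m}}(1-(1-\ve)^{j})$ — here one invokes the $j=1$ SLLN for the empirical measure $\sum_r \delta_{T_r}$, i.e.\ $N(\cdot t)/t \Rightarrow (\cdot)/{\tt m}$ a.s.\ in the Skorokhod sense, so Riemann sums against $N$ converge. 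Letting $\ve\downarrow 0$ recovers the constant $\tfrac{1}{j!{\tt m}^j}$. The part with $(1-\ve)t<T_r\leq t$ must be shown to be $o(t^j)$ a.s.: each summand is bounded by $N^{(r)}_{j-1}(\ve t)$, and there are $N((1-\ve)t,t]=N(t)-N((1-\ve)t)\approx \ve t/{\tt m}$ such terms; a uniform-in-$r$ bound of the form $N^{(r)}_{j-1}(\ve t)\le C(\ve t)^{j-1}$ eventually a.s.\ for all relevant $r$ (obtained from the induction hypothesis plus a maximal/union-bound estimate over the $O(t)$ summands, using the first-order moment control of Proposition \ref{elem} to get a summable tail) yields a bound of order $\ve\cdot t^j$, which vanishes after $\ve\downarrow 0$. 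An alternative, cleaner packaging of this step uses monotonicity of $N_j$ and $N_{j-1}$ together with the mean bound: since $\me N_j(t)\sim t^j/(j!{\tt m}^j)$ and the $N^{(r)}_{j-1}$ are i.i.d.\ copies independent of $T$, one may also run a direct Borel--Cantelli argument along the integer skeleton $t=n$ and then interpolate by monotonicity.

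The main obstacle is the uniform control of the ``boundary'' terms $N^{(r)}_{j-1}(t-T_r)$ for $T_r$ close to $t$: the induction hypothesis only gives an a.s.\ asymptotic for a \emph{single} copy, whereas here we need a bound that holds simultaneously for all $O(t)$ copies indexed by $r$. The way I would handle this is to first upgrade the induction hypothesis to a locally uniform statement — namely that, for each $\delta>0$, almost surely $N_{j-1}(s)\le (1+\delta)s^{j-1}/((j-1)!{\tt m}^{j-1})$ for all $s$ large enough — and then observe that $\max_{r:\,T_r\le t} N^{(r)}_{j-1}(\ve t)$, being a maximum of $N(t)\approx t/{\tt m}$ i.i.d.\ copies each with mean $\asymp (\ve t)^{j-1}$ and with at least a second moment available in the relevant regimes, is itself $O(t^{j-1+o(1)})=o(t^{j-1}\cdot t)$; multiplying by the number $\approx \ve t/{\tt m}$ of boundary indices still gives $o(t^j)$ after sending $\ve\to0$. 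Once the boundary contribution is controlled, the convergence of the bulk Riemann sum against the empirical renewal measure is routine given the $j=1$ case and Proposition \ref{elem}, and letting $\ve\downarrow0$ closes the induction. Finally, monotonicity of $t\mapsto N_j(t)$ lets us pass from convergence along $t=n\in\mn$ to convergence along all real $t$, completing the proof.
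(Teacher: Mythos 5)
Your overall strategy (induction on $j$, base case from the SLLN for the first generation) matches the paper, but your inductive step takes a different decomposition — splitting off the \emph{first} generation and applying the induction hypothesis to the $O(t)$ i.i.d.\ copies $N^{(r)}_{j-1}$ — and this is where there is a genuine gap that your sketch does not close. The induction hypothesis gives, for each \emph{single} copy, a random threshold $s_r(\delta)$ beyond which $N^{(r)}_{j-1}(s)\leq(1+\delta)s^{j-1}/((j-1)!{\tt m}^{j-1})$; nothing in the hypothesis controls $\max_{r\leq Ct}s_r(\delta)$, and your proposed ``upgrade'' is still a single-copy statement, so it does not address uniformity across copies. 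The quantitative fixes you offer also fail as stated: with only the first-moment information of Proposition \ref{elem}, Markov's inequality gives $\mmp\{N_{j-1}(\ve t)>C(\ve t)^{j-1}\}\leq {\rm const}/C$, a bound that does not decay in $t$, so a union bound over $\Theta(t)$ copies produces no ``summable tail''; and even granting all moments of $N_{j-1}(t)/t^{j-1}$ (which under the sole assumption ${\tt m}<\infty$ you would first have to prove — the theorem assumes no second moment of $\xi$ and no moments of $\eta$), the maximum of $\Theta(t)$ i.i.d.\ copies is at best $O(t^{j-1+o(1)})$, never $O(t^{j-1})$, so your bound ``$(\text{count}\approx\ve t)\times(\text{max})$'' is of order $\ve\,t^{j+o(1)}$, which is \emph{not} $o(t^j)$; letting $\ve\downarrow0$ cannot remove a possibly divergent factor $t^{o(1)}$. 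The same uniformity problem affects your bulk term, not only the boundary. A repair would have to bound the boundary \emph{sum} directly (its mean is $\asymp\ve^j t^j$, so one needs a variance or subsequence/monotonicity argument), which amounts to a different proof.

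For comparison, the paper sidesteps the whole issue by splitting off the \emph{last} generation via the second equality in \eqref{basic1232}: $N_{k+1}(t)=\sum_{r\geq1}\bigl(N^{(r)}_{1,k+1}(t-T^{(k)}_r)-V(t-T^{(k)}_r)\bigr)\1_{\{T^{(k)}_r\leq t\}}+\int_{[0,t]}V(t-y)\,{\rm d}N_k(y)$. The deterministic kernel $V$ replaces the random copies of $N_{j-1}$: the integral term is handled pathwise from the induction hypothesis, Proposition \ref{elem} and L'H\^{o}pital's rule, while the centered sum consists of conditionally independent centered terms, so its second moment is at most $\int_{[0,t]}\me(N(t-y))^2\,{\rm d}V_k(y)\leq\me(N(t))^2V_k(t)=O(t^{k+2})$ — only a first-generation second moment is needed, and that is available under ${\tt m}<\infty$ alone since $N(t)\leq\nu(t)$ (Lemma \ref{second}). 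Markov's inequality, Borel--Cantelli along $t=n^2$ and monotone interpolation then finish the step; no uniform control over a growing family of copies is ever required. Your base case is fine if, as the paper does, you cite the known SLLN for perturbed random walks (formula (24) in \cite{Alsmeyer+Iksanov+Marynych:2017}); note that the naive sandwich needs the extra step of truncating $\eta$ at a level $c$ and letting $c\to\infty$, since $\eta_k/k\to0$ a.s.\ is not available without $\me\eta<\infty$.
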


\subsection{A functional limit theorem}

Let $B:=(B(s))_{s\geq 0}$ be a standard Brownian
motion and, for $q\geq 0$, let
$$B_q(s):=\int_{[0,\,s]}(s-y)^q {\rm d}B(y), ~~s\geq 0.$$
The process $B_q:=(B_q(s))_{s\geq 0}$ is a centered Gaussian
process called the {\it fractionally integrated Brownian motion} or the
{\it Riemann-Liouville process}. Plainly, $B=B_0$, $B_1(s)=\int_0^s B(y){\rm d}y$, $s\geq 0$ and, for integer $q\geq 2$,
$$B_q(s)=q!\int_0^s\int_0^{s_2}\ldots\int_0^{s_q} B(y){\rm d}y{\rm d}s_q\ldots{\rm d}s_2,\quad s\geq 0.$$

In the following we write $\Rightarrow$ and ${\overset{{\rm d}}\longrightarrow}$ to denote weak convergence in a function space and weak convergence
of one-dimensional distributions, respectively. As usual, we denote by $D$ the Skorokhod space of right-continuous functions defined on $[0,\infty)$ with finite limits from the left at positive points. We prefer to use $(X_t(u))_{u\geq 0} \Rightarrow (X(u))_{u\geq 0}$ in place of the formally correct notation $X_t(\cdot)\Rightarrow X(\cdot)$.

Given next is a functional limit theorem for $(N_1(ut), N_2(ut),\ldots)_{u\geq
0}$, properly normalized and centered, as $t\to\infty$.
\begin{thm}\label{func2}
Assume that ${\tt m}=\me\xi<\infty$, ${\tt s}^2={\rm Var}\, \xi\in (0,\infty)$ and $\me\eta^a<\infty$ for some $a>0$. Then
\begin{equation} \label{relFunc2}
 \bigg((j-1)! \Big(\frac{N_j(ut)-V_j(ut)}{\sqrt{{\tt m}^{-2j-1}
{\tt s}^2 t^{2j-1}}}\Big)_{u\geq 0} \bigg)_{j\in\mn}~\Rightarrow~
\big((B_{j-1}(u))_{u\geq 0}\big)_{j\in\mn},\quad t\to\infty
\end{equation}
in the product $J_1$-topology on $D^\mn$.

If $\me \eta^{1/2}<\infty$, then the centering $V_j(ut)$ can be replaced with $(ut)^j/(j! {\tt m}^j)$. If $\me\eta^{1/2}=\infty$, the centering $V_j(ut)$ can be replaced with
\begin{equation}\label{center}
\frac{\me (ut-(\eta_1+\ldots+\eta_j))^j\1_{\{\eta_1+\ldots+\eta_j\leq ut \}}}{j! {\tt m}^j}=\frac{\int_0^{t_1}\int_0^{t_2}\ldots\int_0^{t_j}\mmp\{\eta_1+\ldots+\eta_j\leq y\}{\rm d}y{\rm d}t_j\ldots{\rm d}t_2}{{\tt m}^j},
\end{equation}
where $t_1=ut$.
\end{thm}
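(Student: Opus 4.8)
\textbf{Proof proposal for Theorem \ref{func2}.}

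The plan is to reduce the vector-valued functional limit theorem to a one-dimensional statement that is already available in the literature (the Appendix presumably records the functional CLT for renewal-type processes built on a single random walk, i.e.\ for $N$ itself), and then to propagate it through the generations using the convolution structure \eqref{basic1232}. First I would establish the base case $j=1$: under ${\tt m}<\infty$, ${\tt s}^2\in(0,\infty)$ and $\me\eta^a<\infty$ for some $a>0$, one has
$$\Big(\frac{N_1(ut)-V_1(ut)}{\sqrt{{\tt m}^{-3}{\tt s}^2 t}}\Big)_{u\geq 0}~\Rightarrow~(B(u\cdot))_{u\geq0}= (B_0(u))_{u\geq0}$$
in the $J_1$-topology on $D$. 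This is the standard functional CLT for the perturbed-random-walk counting process; the moment condition on $\eta$ is exactly what is needed to control the $\eta$-perturbation and is the reason the centering can be taken to be $(ut)/{\tt m}$ when $\me\eta^{1/2}<\infty$ and must otherwise be the truncated expectation in \eqref{center} with $j=1$. I would cite the corresponding Appendix statement for this and record the precise form of the centering there.

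Next, the inductive step. Fix $j\geq 2$ and write, using the first equality in \eqref{basic1232} applied with the decomposition into the first generation,
$$N_j(ut)=\sum_{r\geq1}N^{(r)}_{j-1}(ut-T_r)\1_{\{T_r\leq ut\}},$$
where the $N^{(r)}_{j-1}$ are i.i.d.\ copies of $N_{j-1}$, independent of $T=(T_r)$. Centering each summand by $V_{j-1}$ and summing, decompose
$$N_j(ut)-V_j(ut)=\underbrace{\sum_{r\geq1}\big(N^{(r)}_{j-1}(ut-T_r)-V_{j-1}(ut-T_r)\big)\1_{\{T_r\leq ut\}}}_{=:A_t(u)}+\underbrace{\Big(\sum_{r\geq1}V_{j-1}(ut-T_r)\1_{\{T_r\leq ut\}}-V_j(ut)\Big)}_{=:C_t(u)}.$$
The term $C_t(u)=\int_{[0,\,ut]}V_{j-1}(ut-y)\,{\rm d}(N_1(y)-V_1(y))$ is, after an integration by parts, a continuous linear functional of $N_1-V_1$ driven by the (polynomially growing) kernel $V_{j-1}$; by Proposition \ref{elem}, $V_{j-1}(s)\sim s^{j-1}/((j-1)!{\tt m}^{j-1})$. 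Using the $j=1$ functional CLT together with a continuous-mapping argument for the functional $g\mapsto \int_{[0,\,u]}(u-y)^{j-1}\,{\rm d}g(y)$ (legitimate because this map is continuous from $D$ to $D$ in $J_1$ at continuous limit paths, and Brownian motion is a.s.\ continuous), one gets that the normalized $C_t$ converges to a multiple of $B_{j-1}$; matching constants via ${\tt m}^{-1}$ weights gives precisely the factor that turns $B$ into $B_{j-1}(u)/(j-1)!$ after normalization by $\sqrt{{\tt m}^{-2j-1}{\tt s}^2 t^{2j-1}}$. The term $A_t(u)$ is a sum of conditionally independent, centered contributions whose conditional variance, given $T$, is $\sum_r {\rm Var}\,N_{j-1}(ut-T_r)$; by the induction hypothesis (or by Theorem \ref{thm:variance} in the standard-walk case, or the elementary bound ${\rm Var}\,N_{j-1}(s)=O(s^{2j-3})$ that follows from it), this is of order $\int_0^{ut}(ut-y)^{2j-3}\,{\rm d}V_1(y)=O(t^{2j-2})=o(t^{2j-1})$, so $A_t$ is asymptotically negligible after the $t^{2j-1/2}$ normalization. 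A Lyapunov/Lindeberg check (using $\me\eta^a<\infty$ only in the $j=1$ layer, and moment bounds on $N_{j-1}$ otherwise) makes the negligibility of $A_t$ rigorous uniformly on compact $u$-intervals, hence in $J_1$ on $D$.

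To upgrade the one-at-a-time convergence to joint convergence of the whole family $(\,\cdot\,)_{j\in\mn}$ in the product $J_1$-topology on $D^{\mn}$, I would note that all coordinates are, in the limit, continuous functionals of the \emph{single} Brownian motion $B$ arising from $N_1-V_1$: unwinding the induction, $N_j(ut)-V_j(ut)$ normalized converges jointly with $N_1(ut)-V_1(ut)$ normalized, and the limit of the $j$th coordinate is $B_{j-1}(u)/(j-1)!$ built from the \emph{same} $B$ (the $A_t$-type remainders at every level contribute independent noise that vanishes and hence does not appear in the joint limit). Joint convergence in $D^{\mn}$ then follows from joint convergence of each finite initial segment $(1,\dots,k)$, which in turn follows from the continuous-mapping theorem applied to the map $B\mapsto (B_0,B_1/1!,\dots,B_{k-1}/(k-1)!)$ together with the $j=1$ functional CLT; convergence in the product topology on $D^{\mn}$ is equivalent to convergence of all such finite-dimensional projections. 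Finally, the statements about replacing the centering are handled exactly as in the $j=1$ case: the difference between $V_j(ut)$ and $(ut)^j/(j!{\tt m}^j)$ is, by Theorem \ref{rate} (when $\me\eta<\infty$) or by a direct truncation estimate (when only $\me\eta^{1/2}<\infty$ or less), of order $o(t^{j-1/2})$, and the explicit integral in \eqref{center} is precisely $V_j^{\,0}(ut)$, the mean counting function of the $\eta$-free iterate, whose discrepancy from $V_j(ut)$ is controlled by the truncated first moment of $\eta_1+\dots+\eta_j$; in each regime this discrepancy is $o(\sqrt{{\tt m}^{-2j-1}{\tt s}^2 t^{2j-1}})$, so Slutsky's lemma lets us swap the centerings without affecting the limit.

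The main obstacle I expect is not any single estimate but the bookkeeping needed to show that the $A_t$-remainders at all $j$ levels are \emph{jointly} negligible and that the surviving limit across all coordinates is driven by one and the same Brownian motion; making the continuous-mapping step airtight requires checking that the integral functionals $g\mapsto\int_{[0,\,u]}(u-y)^{q}{\rm d}g(y)$ are continuous in the $J_1$ topology at paths $g$ that are continuous (which is where the polynomial — rather than merely bounded — kernel $V_{j-1}$ forces a little care), and that the $o(t^{2j-1})$ variance bound for $A_t$ holds uniformly enough to kill the term in $D$ and not just in finite-dimensional distributions. The moment hypothesis $\me\eta^a<\infty$ enters only to guarantee that the first-generation process $N_1$ does not carry heavy-tailed fluctuations from the $\eta$-shifts; once the $j=1$ layer is under control, the higher generations inherit good behavior automatically from \eqref{convol} and the variance asymptotics.
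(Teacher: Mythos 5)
Your overall strategy is sound, and in fact it is essentially a hand-made reconstruction of the transfer theorem that the paper simply invokes: the paper's proof verifies the three hypotheses of Theorem \ref{main0} (the sandwich bound \eqref{lord3} for $V$, the maximal second-moment bound of Proposition \ref{suprem}, and the $j=1$ functional CLT of Proposition \ref{prConvN}, the latter via Alsmeyer--Iksanov--Marynych) and then reads off \eqref{relFunc2}; for $\me\eta<\infty$ it just cites Gnedin--Iksanov. Your decomposition $N_j-V_j=A_t+C_t$ through the first generation, the dominance of $C_t=\int_{[0,\,ut]}V_{j-1}(ut-y)\,{\rm d}(N_1-V_1)(y)$, the identification of a single driving Brownian motion for all coordinates, and the constant matching are all consistent with that machinery. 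The genuine gap is exactly at the point you flag but do not resolve: killing $A_t$ \emph{in the function space}. A pointwise conditional-variance bound of order $t^{2j-2}$ plus a ``Lyapunov/Lindeberg check'' only gives negligibility of finite-dimensional distributions; to get $\sup_{u\leq T}|A_t(u)|=o_P(t^{j-1/2})$ you need a maximal inequality of the type $\me\sup_{s\leq t}(N_{j-1}(s)-V_{j-1}(s))^2=O(t^{2j-3})$, propagated inductively, and at the base level (the only case not already in the literature, namely $\me\eta^a<\infty$ with $a\in(0,1)$ and $\me\eta=\infty$) this is precisely Proposition \ref{suprem}, whose proof occupies most of Section \ref{aux21} and uses a dyadic chaining argument in the spirit of Resnick--Rootz\'{e}n. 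Your plan treats the whole $j=1$ layer as citable ``standard'' material, whereas this maximal bound is the paper's main technical work. Relatedly, you cannot appeal to Theorem \ref{thm:variance} for ${\rm Var}\,N_{j-1}$ in the inductive step: it is stated only for $\eta=\xi$ a.s., and the paper explicitly says the variance asymptotics for genuine perturbed walks is unknown, so even the pointwise $O(s^{2j-3})$ bound would have to be produced separately (e.g.\ via a perturbed analogue of \eqref{mom} or via the propagated sup-moment bound).

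Two smaller inaccuracies. First, the centering replacements cannot be handled ``exactly as in the $j=1$ case'' via Theorem \ref{rate}: that theorem assumes $\xi$ nonlattice and $\me\eta<\infty$, neither of which is assumed here; the paper instead proves \eqref{aux4} directly from Lemma \ref{lefthand} and the explicit upper bound for $U_j(t)-t^j/(j!{\tt m}^j)$ borrowed from Buraczewski--Dovgay--Iksanov, and then disposes of the $\me\eta^{1/2}<\infty$ case by an L'Hospital computation; your ``truncated first moment'' heuristic for \eqref{center} is in the right spirit but needs these quantitative $U_j$ bounds, which also control the error made when you replace the kernel $V_{j-1}$ by its polynomial asymptote inside $C_t$. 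Second, the Appendix does not contain the $j=1$ functional CLT you propose to cite; it contains the general propagation theorem (Theorem \ref{main0}), which, once its hypotheses are checked, makes your entire induction unnecessary --- the induction you sketch is, in effect, a proof of that theorem, and its hard steps (uniform negligibility of the remainders and the sup-moment bounds) are exactly the hypotheses the paper verifies.
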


Now we derive a one-dimensional central limit theorem for $N_j$. To this end, it is enough to restrict attention to just one coordinate in \eqref{relFunc2}, put $u=1$ there and note that $B_{j-1}(1)$ has the same distribution as $(2j-1)^{-1/2}B(1)$.
\begin{cor}\label{cor onedim}
Under the assumptions of Theorem \ref{func2}, for fixed $j\in\mn$,
\begin{equation}\label{eq:onedim}
\frac{(j-1)!(2j-1)^{1/2}{\tt m}^{j+1/2}}{{\tt s}t^{j-1/2}}\big(N_j(t)-V_j(t)\big)~{\overset{{\rm d}}\longrightarrow}~
B(1),\quad t\to\infty.
\end{equation}
\end{cor}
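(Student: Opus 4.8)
The plan is to obtain Corollary~\ref{cor onedim} as a direct consequence of the functional limit theorem, Theorem~\ref{func2}, by reading off one coordinate. Concretely, I would fix $j\in\mn$, isolate the $j$th component of the weak convergence in \eqref{relFunc2}, and invoke the continuous mapping theorem with the evaluation map $x\mapsto x(1)$ from $D$ to $\mr$. Since the limit process $B_{j-1}$ is a.s.\ continuous at the deterministic point $u=1$, this evaluation is a.s.\ continuous with respect to the $J_1$-topology along the limit law, so the one-dimensional convergence
$$(j-1)!\,\frac{N_j(t)-V_j(t)}{\sqrt{{\tt m}^{-2j-1}{\tt s}^2 t^{2j-1}}}~\overset{{\rm d}}\longrightarrow~B_{j-1}(1),\quad t\to\infty,$$
follows from \eqref{relFunc2} with $u=1$.

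Next I would identify the law of $B_{j-1}(1)$. By definition $B_{j-1}(1)=\int_{[0,1]}(1-y)^{j-1}{\rm d}B(y)$, which is a centered Gaussian (Wiener) integral, hence normal with mean zero and variance $\int_0^1 (1-y)^{2(j-1)}{\rm d}y = 1/(2j-1)$. Therefore $B_{j-1}(1)\od (2j-1)^{-1/2}B(1)$, exactly as asserted in the paragraph preceding the corollary. Substituting this identification and rearranging the normalizing constant — multiplying through by $(2j-1)^{1/2}$ and rewriting $\sqrt{{\tt m}^{-2j-1}{\tt s}^2 t^{2j-1}}={\tt s}\,{\tt m}^{-j-1/2}t^{j-1/2}$ — turns the display above into \eqref{eq:onedim}.

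There is essentially no obstacle here: the corollary is a soft specialization, and the only point requiring (minimal) care is the applicability of the continuous mapping theorem, i.e.\ the remark that the coordinate projection $D^\mn\to D$ and then the evaluation $D\to\mr$ at $u=1$ is continuous on the support of the limit because Riemann--Liouville processes have continuous paths. All the analytic work is already contained in Theorem~\ref{func2}.
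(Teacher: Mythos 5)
Your argument is correct and is exactly the route the paper takes: project onto the $j$th coordinate of \eqref{relFunc2}, evaluate at $u=1$ (legitimate by continuous mapping since $B_{j-1}$ has a.s.\ continuous paths), and use that $B_{j-1}(1)$ is centered Gaussian with variance $\int_0^1(1-y)^{2(j-1)}{\rm d}y=1/(2j-1)$, i.e.\ $B_{j-1}(1)\od (2j-1)^{-1/2}B(1)$, before rearranging the constants. Your write-up merely makes explicit the variance computation and the continuity justification that the paper leaves implicit.
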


\section{Auxiliary tools}\label{aux21}

For $t\geq 0$, put $U(t):=\sum_{n\geq 0}\mmp\{S_n\leq t\}$, so that $U$ is the renewal function of $(S_n)_{n\in \mn_0}$. Whenever $\E\xi^2<\infty$, we have
\begin{equation}\label{lord}
0\leq U(t)-{\tt m}^{-1}t \leq c_U,\quad t\geq 0,
\end{equation}
where $c_U:={\tt m}^{-2} \me\xi^2$. The right-hand side is called {\it Lorden's inequality}. Perhaps, it is not commonly known that Lorden's inequality takes the same form for nonlattice and lattice distributions of $\xi$, and we refer to Section 3 in \cite{Bohun etal:2021} for the explanation of this fact. The left-hand inequality in \eqref{lord} is a consequence of Wald's identity $t\leq \E S_{\nu(t)}={\tt m} U(t)$, where $\nu(t):=\inf\{k\in\mn: S_k>t\}$ for $t\geq 0$.

Let us show that the left-hand inequality in \eqref{lord} extends to the convolution powers $U_j=U^{\ast(j)}$ (see \eqref{convol} for the definition) in the following sense.
\begin{lemma}\label{lefthand}
Let $j\in\mn$ and ${\tt m}=\me\xi\in (0,\infty)$. Then
\begin{equation}\label{lordleft}
U_j(t)\geq \frac{t^j}{j!{\tt m}^j},\quad t\geq 0.
\end{equation}
\end{lemma}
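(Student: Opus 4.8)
The plan is to prove \eqref{lordleft} by induction on $j$, using Wald's identity as the base case and exploiting the associativity of convolution together with the left-hand inequality already available at each stage. For $j=1$, the bound $U(t)=U_1(t)\geq t/{\tt m}$ is exactly the left-hand side of \eqref{lord}, which the excerpt attributes to Wald's identity $t\leq \E S_{\nu(t)}={\tt m} U(t)$. So the base case is in hand.

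For the inductive step, suppose $U_{j-1}(t)\geq t^{j-1}/((j-1)!{\tt m}^{j-1})$ for all $t\geq 0$. Writing $U_j=U_{j-1}\ast U$ as in \eqref{convol}, I would use the representation
\begin{equation*}
U_j(t)=\int_{[0,\,t]}U_{j-1}(t-y)\,{\rm d}U(y)\geq \frac{1}{(j-1)!{\tt m}^{j-1}}\int_{[0,\,t]}(t-y)^{j-1}\,{\rm d}U(y),
\end{equation*}
where monotonicity of the integrand in the inductive hypothesis is used to pass the lower bound inside the integral (the measure ${\rm d}U$ being nonnegative). Thus it remains to show that $\int_{[0,\,t]}(t-y)^{j-1}\,{\rm d}U(y)\geq t^j/(j{\tt m})$. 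The natural route is to integrate by parts (Fubini on the region $\{(y,s): 0\leq y\leq s\leq t\}$ after writing $(t-y)^{j-1}=(j-1)\int_y^t (s-y)^{j-2}\,{\rm d}s$ — or more directly $(t-y)^{j-1}$ against the known bound $U(y)\geq y/{\tt m}$), converting the Stieltjes integral against ${\rm d}U$ into an ordinary integral of $U$. Concretely, integration by parts gives
\begin{equation*}
\int_{[0,\,t]}(t-y)^{j-1}\,{\rm d}U(y)=(j-1)\int_0^t (t-y)^{j-2}U(y)\,{\rm d}y\geq \frac{j-1}{{\tt m}}\int_0^t (t-y)^{j-2}y\,{\rm d}y=\frac{t^j}{j(j-1){\tt m}},
\end{equation*}
wait — this needs $(j-1)(j-1)/\big(j(j-1)\big)$ bookkeeping; the cleaner symmetric choice is to instead bound one factor of $U$ and keep the polynomial, i.e. write $U_j(t)=\int_{[0,t]}U(t-y)\,{\rm d}U_{j-1}(y)\geq {\tt m}^{-1}\int_{[0,t]}(t-y)\,{\rm d}U_{j-1}(y)$ and then integrate by parts using the inductive hypothesis $U_{j-1}(y)\geq y^{j-1}/((j-1)!{\tt m}^{j-1})$, yielding $\int_{[0,t]}(t-y)\,{\rm d}U_{j-1}(y)=\int_0^t U_{j-1}(y)\,{\rm d}y\geq t^j/(j!{\tt m}^{j-1})$, hence $U_j(t)\geq t^j/(j!{\tt m}^j)$, completing the induction.

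The only subtlety — and the point I expect to require the most care in writing — is the justification of the integration-by-parts / Fubini step for the Lebesgue–Stieltjes integral against the (right-continuous, nondecreasing, possibly having an atom at $0$ since $U(0)=1$) renewal measure ${\rm d}U$; one must be careful about endpoint contributions and whether the integral is over $[0,t]$ or $(0,t]$. This is routine but deserves a clean one-line statement, perhaps invoking the standard formula $\int_{[0,t]}g(t-y)\,{\rm d}U(y)=g(t)U(0)+\int_0^t g'(t-y)U(y)\,{\rm d}y$ for absolutely continuous $g$ with $g(0)=0$, which is exactly the shape needed with $g(x)=x$ or $g(x)=x^{j-1}$. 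No assumption beyond ${\tt m}<\infty$ is needed, consistent with the hypotheses of the lemma, since Wald's identity requires only finiteness of the mean.
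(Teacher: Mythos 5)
Your argument is correct and is essentially the paper's own proof: induction on $j$, with the base case $U(t)\geq t/{\tt m}$ from Wald/Lorden, the bound $U(t-y)\geq (t-y)/{\tt m}$ inserted into $U_j(t)=\int_{[0,t]}U(t-y)\,{\rm d}U_{j-1}(y)$, and the Fubini identity $\int_{[0,t]}(t-y)\,{\rm d}U_{j-1}(y)=\int_0^t U_{j-1}(y)\,{\rm d}y$ combined with the induction hypothesis (the paper merely packages these two nonnegative contributions into a single displayed identity). Incidentally, the first route you abandoned also works, since $\frac{j-1}{{\tt m}}\int_0^t(t-y)^{j-2}y\,{\rm d}y=\frac{t^j}{j{\tt m}}$, exactly what was needed.
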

\begin{proof}
We use the mathematical induction. When $j=1$, \eqref{lordleft} reduces to the left-hand inequality in \eqref{lord}. Assuming that \eqref{lordleft} holds for $j\leq k$ we infer
$$U_{k+1}(t)-\frac{t^{k+1}}{(k+1)!{\tt m}^{k+1}}=\int_{[0,\,t]}\Big(U(t-y)-\frac{t-y}{{\tt m}}\Big){\rm d}U_k(y)+\frac{1}{{\tt m}}\int_0^t \Big(U_k(y)-\frac{y^k}{k!{\tt m}^k}\Big){\rm d}y\geq 0,$$ that is, \eqref{lordleft} holds with $j=k+1$.
\end{proof}

Put $G(t):=\mmp\{\eta\leq t\}$ for $t\in\mr$. Observe that $G(t)=0$ for $t<0$. Since $V(t)\leq U(t)$ for $t\geq 0$ we infer
\begin{equation}\label{lord1}
V(t)-{\tt m}^{-1}t \leq c_U,\quad t\geq 0
\end{equation}
for any distribution of $\eta$. On the other hand, assuming that $\me\eta<\infty$,
\begin{eqnarray*}
V(t)-{\tt m}^{-1}t&=&\int_{[0,\,t]}(U(t-y)-{\tt m}^{-1}(t-y)){\rm d}G(y)\\&\hphantom{==}-& {\tt m}^{-1} \int_0^t (1-G(y)){\rm d}y\geq-{\tt m}^{-1}\int_0^t (1-G(y)){\rm d}y\geq
-{\tt m}^{-1}\me\eta
\end{eqnarray*}
having utilized $U(t)\geq {\tt m}^{-1}t$ for $t\geq 0$. Assuming that $\me\eta^a<\infty$ for some $a\in (0,1)$ which particularly implies that $\lim_{t\to\infty}t^a\mmp\{\eta>t\}=0$ we infer
\begin{equation*}
V(t)-{\tt m}^{-1}t\geq -{\tt m}^{-1}\int_0^t (1-G(y)){\rm d}y\geq -c_1-c_2t^{1-a},\quad t\geq 0.
\end{equation*}
Thus, we have proved the following.
\begin{lemma}
Assume that $\E\xi^2<\infty$. If $\me\eta<\infty$, then
\begin{equation}\label{lord2}
|V(t)-{\tt m}^{-1}t|\leq c_V,\quad t\geq 0
\end{equation}
where $c_V:=\max(c_U, {\tt m}^{-1}\me\eta)$ and  $c_U={\tt m}^{-2} \me\xi^2$. If $\me\eta^a<\infty$ for some $a\in (0,1)$, then
\begin{equation}\label{lord3}
-c_1-c_2t^{1-a}\leq V(t)-{\tt m}^{-1}t\leq c_V,\quad t\geq 0
\end{equation}
for appropriate positive constants $c_1$ and $c_2$.
\end{lemma}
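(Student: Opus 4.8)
The plan is to reduce everything to the one-level inequalities for the renewal function $U$ recorded in \eqref{lord}, via the convolution identity $V=U\ast G$ with $G(t)=\mmp\{\eta\le t\}$. This identity holds because $T_k=S_{k-1}+\eta_k$ and, the vectors $(\xi_i,\eta_i)$ being i.i.d., $\eta_k$ is independent of $S_{k-1}=\xi_1+\ldots+\xi_{k-1}$; hence $\mmp\{T_k\le t\}=\int_{[0,t]}\mmp\{S_{k-1}\le t-y\}\,{\rm d}G(y)$, and summing over $k\ge1$ gives $V(t)=\int_{[0,t]}U(t-y)\,{\rm d}G(y)$. Subtracting ${\tt m}^{-1}t$, and using $\int_{[0,t]}(t-y)\,{\rm d}G(y)=\int_0^t G(y)\,{\rm d}y$ (Fubini), one arrives at
\[
V(t)-{\tt m}^{-1}t=\int_{[0,t]}\big(U(t-y)-{\tt m}^{-1}(t-y)\big)\,{\rm d}G(y)-{\tt m}^{-1}\int_0^t(1-G(y))\,{\rm d}y.
\]

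From here I would simply read off the two bounds from \eqref{lord}. For the upper bound, Lorden's inequality gives $0\le U(t-y)-{\tt m}^{-1}(t-y)\le c_U$ on the range of integration, so the first term is at most $c_UG(t)\le c_U$; the second term being nonpositive, this yields $V(t)-{\tt m}^{-1}t\le c_U\le c_V$ for all $t\ge0$ and \emph{every} law of $\eta$. For the lower bound, the left-hand inequality in \eqref{lord} makes the first term nonnegative, hence $V(t)-{\tt m}^{-1}t\ge-{\tt m}^{-1}\int_0^t(1-G(y))\,{\rm d}y$. If $\me\eta<\infty$ then $\int_0^t(1-G(y))\,{\rm d}y\le\me\eta$, so $V(t)-{\tt m}^{-1}t\ge-{\tt m}^{-1}\me\eta$, and combined with the upper bound this is \eqref{lord2}. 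If only $\me\eta^a<\infty$ for some $a\in(0,1)$, Markov's inequality applied to $\eta^a\ind{\eta>y}$ shows $y^a\mmp\{\eta>y\}\to0$, hence $1-G(y)\le Cy^{-a}$ for $y$ large and $\int_0^t(1-G(y))\,{\rm d}y\le c_1'+c_2't^{1-a}$; after renaming the constants this gives the left inequality in \eqref{lord3}, while the right one is again $V(t)-{\tt m}^{-1}t\le c_V$.

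There is no real obstacle here; the argument is essentially bookkeeping with nonnegative and nonpositive terms. The only points requiring a little care are the justification of $V=U\ast G$ (the independence of $\eta_k$ and $S_{k-1}$, and the positivity of $\xi$ to dispose of the atom of $G$ at the right endpoint of integration) and keeping straight which half of \eqref{lord} is used for which of the two bounds. The fact that Lorden's inequality holds with the same constant $c_U={\tt m}^{-2}\me\xi^2$ in the lattice and nonlattice cases alike — which is precisely what allows the statement to avoid any nonlatticeness hypothesis — is the one ingredient taken from outside, namely from \cite{Bohun etal:2021}, as recalled above.
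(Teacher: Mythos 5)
Your argument is correct and is essentially the paper's own proof: the same decomposition $V(t)-{\tt m}^{-1}t=\int_{[0,\,t]}\big(U(t-y)-{\tt m}^{-1}(t-y)\big)\,{\rm d}G(y)-{\tt m}^{-1}\int_0^t(1-G(y))\,{\rm d}y$, combined with the two halves of Lorden's inequality \eqref{lord} and the tail bounds on $1-G$ under $\me\eta<\infty$ resp.\ $\me\eta^a<\infty$. The only cosmetic difference is that the paper gets the upper bound directly from the monotonicity relation $V(t)\le U(t)$ rather than from the convolution identity; this changes nothing of substance.
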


Lemma \ref{aux123} is needed for the proof of Theorem \ref{thm:variance}. Put $\tilde U(t):=\sum_{r\geq 1}\mmp\{S_r\leq t\}=U(t)-1$ for $t\geq 0$.
\begin{lemma}\label{aux123}
Assume that the distribution of $\xi$ is nonlattice and $\me\xi^2<\infty$. Then, for fixed $j\in\mn$,
\begin{equation}\label{inter3}
\int_{[0,\,t]}(t-y)^j{\rm d}\tilde U(y)=\frac{t^{j+1}}{(j+1){\tt m}}+ (b_U-1)t^j +o(t^j),\quad t\to\infty,
\end{equation}
where $b_U:=\me \xi^2/(2{\tt m}^2)$.
\end{lemma}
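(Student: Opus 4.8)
The plan is to express the integral $\int_{[0,\,t]}(t-y)^j{\rm d}\tilde U(y)$ as a convolution and then apply the key renewal theorem together with Lorden's inequality. Writing $h_j(y):=y^j$, we have $\int_{[0,\,t]}(t-y)^j{\rm d}\tilde U(y)=(h_j\ast \tilde U)(t)=\int_{[0,\,t]}\tilde U(t-y){\rm d}h_j(y)=j\int_0^t (t-y)^{j-1}\tilde U(y){\rm d}y$, using integration by parts and $h_j'(y)=jy^{j-1}$. Now split $\tilde U(y)=U(y)-1=({\tt m}^{-1}y + (U(y)-{\tt m}^{-1}y))-1$. The polynomial part contributes $j\int_0^t (t-y)^{j-1}({\tt m}^{-1}y-1){\rm d}y$, which is an explicit elementary integral equal to $\frac{t^{j+1}}{(j+1){\tt m}}-t^j$ after a routine computation (the $j$ and the beta-type integrals $\int_0^t(t-y)^{j-1}y\,{\rm d}y=t^{j+1}/(j(j+1))$ and $\int_0^t(t-y)^{j-1}{\rm d}y=t^j/j$ combine cleanly). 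The remaining term is $j\int_0^t (t-y)^{j-1}\big(U(y)-{\tt m}^{-1}y\big){\rm d}y$, and by Lorden's inequality \eqref{lord} the integrand is bounded by $c_U\cdot j(t-y)^{j-1}$, so this term is trivially $O(t^j)$; to extract the precise coefficient $b_U$ we need the limit $\lim_{y\to\infty}(U(y)-{\tt m}^{-1}y)=\me\xi^2/(2{\tt m}^2)=b_U$, which holds by the key renewal theorem (nonlatticeness and $\me\xi^2<\infty$ are exactly the hypotheses needed; see e.g. the classical renewal theorem for the overshoot). Then a dominated-convergence/Cesàro argument shows $j\int_0^t(t-y)^{j-1}(U(y)-{\tt m}^{-1}y){\rm d}y = b_U t^j + o(t^j)$: indeed, substituting $y=ts$ gives $jt^j\int_0^1(1-s)^{j-1}(U(ts)-{\tt m}^{-1}ts){\rm d}s$, the integrand is bounded by $c_U j(1-s)^{j-1}$ and converges pointwise on $(0,1)$ to $b_U j(1-s)^{j-1}$, whose integral over $[0,1]$ is $b_U$.

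Adding the two contributions yields $\frac{t^{j+1}}{(j+1){\tt m}}-t^j+b_U t^j+o(t^j)=\frac{t^{j+1}}{(j+1){\tt m}}+(b_U-1)t^j+o(t^j)$, which is exactly \eqref{inter3}.

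The only genuine input beyond bookkeeping is the asymptotic relation $U(y)={\tt m}^{-1}y+b_U+o(1)$ as $y\to\infty$, i.e. the sharp form of the elementary renewal theorem with the constant $b_U=\me\xi^2/(2{\tt m}^2)$; this is standard under nonlatticeness and a finite second moment (it follows from the key renewal theorem applied to $U-{\tt m}^{-1}\mathrm{id}$, or from the limiting distribution of the overshoot). The main obstacle is therefore not any one deep step but rather making sure the passage to the limit under the integral is justified uniformly; the clean way is the rescaling $y=ts$ together with the boundedness supplied by Lorden's inequality \eqref{lord}, which dominates the rescaled integrand by an integrable function independent of $t$. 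Everything else is elementary calculus.
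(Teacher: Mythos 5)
Your proof is correct and follows essentially the same route as the paper: both arguments reduce the claim to the second-order renewal asymptotics $U(t)-{\tt m}^{-1}t\to b_U$ (equivalently $\tilde U(t)={\tt m}^{-1}t+b_U-1+o(1)$, which is \eqref{asymp1} with $j=1$ and $\eta=\xi$) and then integrate this expansion against the polynomial kernel. The only difference is bookkeeping: the paper rewrites the integral as a $j$-fold iterated integral of $\tilde U$ and argues by induction, whereas you do a single integration by parts and pass to the limit via the rescaling $y=ts$ and dominated convergence, with Lorden's inequality supplying the dominating bound --- a cosmetic variation.
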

\begin{proof}
Using the easily checked formula $$\int_{[0,\,t]}(t-y)^j{\rm d}\tilde U(y)=j\int_0^t\int_{[0,\,s]}(s-y)^{j-1}{\rm d}\tilde U(y){\rm d}s,\quad j\in\mn,~~t\geq 0$$ and the mathematical induction one can show that $$\int_{[0,\,t]}(t-y)^j{\rm d}\tilde U(y)=j!\int_0^t\int_0^{y_j}\ldots\int_0^{y_2}\tilde U(y_1){\rm d}y_1\ldots{\rm d}y_j,\quad j\in\mn,~~ t\geq 0.$$ Now \eqref{inter3} follows from the latter equality and the relation $\tilde U(t)={\tt m}^{-1}t+b_U-1+o(1)$ as $t\to\infty$ which is nothing else but formula \eqref{asymp1} with $j=1$ and $\eta=\xi$ a.s.
\end{proof}

Lemma \ref{second} will be used in the proof of Theorem \ref{strong}.
\begin{lemma}\label{second}
Assume that ${\tt m}=\me\xi<\infty$. Then $$\lim_{t\to\infty}\frac{\me (N(t))^2}{t^2}=\frac{1}{{\tt m}^2}.$$
\end{lemma}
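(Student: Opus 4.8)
The plan is to squeeze $\me(N(t))^2$ between expressions asymptotic to $t^2/{\tt m}^2$. The lower bound is immediate: by Jensen's inequality $\me(N(t))^2\ge(\me N(t))^2=V(t)^2$, and Proposition \ref{elem} with $j=1$ gives $V(t)\sim t/{\tt m}$, so that $\liminf_{t\to\infty}t^{-2}\me(N(t))^2\ge{\tt m}^{-2}$.

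For the matching upper bound I would compare $N(t)$ with an ordinary renewal counting function. Since $\eta_i>0$ a.s., $\{T_i\le t\}=\{S_{i-1}+\eta_i\le t\}\subseteq\{S_{i-1}\le t\}$, whence $N(t)\le\nu(t):=\inf\{k\in\mn:S_k>t\}=\#\{n\in\mn_0:S_n\le t\}$ a.s. Therefore $\me(N(t))^2\le\me(\nu(t))^2$, and it suffices to prove $\me(\nu(t))^2\sim t^2/{\tt m}^2$ as $t\to\infty$.

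To compute $\me(\nu(t))^2$, observe that, $(S_n)$ being nondecreasing, $\{\nu(t)>k\}=\{S_k\le t\}$ for every $k\in\mn_0$, so, with $F$ denoting the distribution of $\xi$,
$$\me(\nu(t))^2=\sum_{k\ge 0}(2k+1)\mmp\{S_k\le t\}=2\sum_{k\ge 0}(k+1)F^{\ast k}(t)-\sum_{k\ge 0}F^{\ast k}(t)=2U_2(t)-U(t),$$
where we used $U=\sum_{k\ge 0}F^{\ast k}$ and $U_2=U^{\ast(2)}=\sum_{k\ge 0}(k+1)F^{\ast k}$; all series converge since $\mmp\{S_k\le t\}\le e^{\lambda t}(\me e^{-\lambda\xi})^k$ decays geometrically in $k$ (here $\lambda>0$ is arbitrary and $\me e^{-\lambda\xi}<1$ because $\xi>0$ a.s.). As $U(t)\sim t/{\tt m}=o(t^2)$ by the elementary renewal theorem (equivalently, Proposition \ref{elem} with $j=1$ and $\eta=\xi$ gives $\tilde U(t)=U(t)-1\sim t/{\tt m}$), the claim reduces to $U_2(t)\sim t^2/(2{\tt m}^2)$.

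Finally I would establish $U_2(t)\sim t^2/(2{\tt m}^2)$ as follows. The bound $U_2(t)\ge t^2/(2{\tt m}^2)$ is exactly Lemma \ref{lefthand} with $j=2$. For the reverse bound, $U(s)\sim s/{\tt m}$ together with monotonicity yields, for each $\ve>0$, a constant $C_\ve$ with $U(s)\le C_\ve+(1+\ve)s/{\tt m}$ for all $s\ge 0$; plugging this into $U_2(t)=\int_{[0,t]}U(t-y){\rm d}U(y)$ and using $\int_{[0,t]}(t-y){\rm d}U(y)=\int_0^t U(y){\rm d}y\sim t^2/(2{\tt m})$ gives $U_2(t)\le C_\ve U(t)+{\tt m}^{-1}(1+\ve)\int_0^t U(y){\rm d}y$, so $\limsup_{t\to\infty}t^{-2}U_2(t)\le(1+\ve)/(2{\tt m}^2)$, and letting $\ve\downarrow0$ completes the proof that $U_2(t)\sim t^2/(2{\tt m}^2)$. (One could instead quote Proposition \ref{elem} with $j=2$, $\eta=\xi$, for which $V_2=\tilde U^{\ast(2)}$ and $\tilde U^{\ast(2)}(t)=U_2(t)-2U(t)+1$.) Hence $\me(\nu(t))^2=2U_2(t)-U(t)\sim t^2/{\tt m}^2$, so $\limsup_{t\to\infty}t^{-2}\me(N(t))^2\le{\tt m}^{-2}$, which together with the lower bound gives the assertion. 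The step requiring the most care is this last one: under the sole hypothesis $\me\xi<\infty$ Lorden's two-sided inequality is unavailable, so one must work with the one-sided bound $U(s)\ge s/{\tt m}$ (via Lemma \ref{lefthand}) and an $\ve$-approximation of $U$ from above, rather than with precise second-order control of $U$.
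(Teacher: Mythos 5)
Your proof is correct and follows essentially the same route as the paper: the lower bound via $\me(N(t))^2\geq (V(t))^2$ and Proposition \ref{elem}, and the upper bound via the domination $N(t)\leq\nu(t)$ together with $\me(\nu(t))^2\sim t^2/{\tt m}^2$. The only difference is that the paper simply cites Theorem 5.1(ii) on p.~57 of \cite{Gut:2009} for the second-moment asymptotics of $\nu(t)$, whereas you rederive it through the identity $\me(\nu(t))^2=2U_2(t)-U(t)$, Lemma \ref{lefthand} and an $\ve$-argument (or Proposition \ref{elem} with $j=2$, $\eta=\xi$), which is a valid, self-contained substitute.
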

\begin{proof}
The relation $${\lim\inf}_{t\to\infty}\frac{\me (N(t))^2}{t^2}\geq \frac{1}{{\tt m}^2}$$ follows from $\me (N(t))^2\geq (V(t))^2$ and Proposition \ref{elem}. The converse inequality for the limit superior is implied by the inequality
$$N(t)\leq \nu(t)=\sum_{i\geq 0}\1_{\{S_i\leq t\}},\quad t\geq 0\quad\text{a.s.}$$ and $\lim_{t\to\infty}t^{-2}\me (\nu(t))^2={\tt m}^{-2}$ (see Theorem 5.1 (ii) on p.~57 in \cite{Gut:2009}).
\end{proof}

Propositions \ref{prConvN} and \ref{suprem} are important ingredients in the proof of Theorem \ref{func2}.
\begin{assertion} \label{prConvN}
Assume that ${\tt m}=\me\xi<\infty$, ${\tt s}^2={\rm Var}\, \xi\in (0,\infty)$ and $\me\eta^a<\infty$ for some $a>0$. Then
\begin{equation} \label{convN}
\Big(\frac{N(ut)-V(ut)}{\sqrt{{\tt m}^{-3}
{\tt s}^2 t}}\Big)_{u\geq 0}~\Rightarrow~(B(u))_{u\geq 0},\quad t\to\infty
\end{equation}
in the $J_1$-topology on $D$, where $(B(u))_{u\geq 0}$ is a standard Brownian motion.
\end{assertion}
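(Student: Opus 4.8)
The plan is to isolate the perturbations $(\eta_k)$ from the driving random walk and to reduce \eqref{convN} to the classical functional central limit theorem for the renewal counting process $\nu(s):=\sum_{n\ge 0}\1_{\{S_n\le s\}}$. Since $N(s)=\nu(s)-D(s)$ and $V(s)=U(s)-\me D(s)$ with $D(s):=\sum_{k\ge 1}\1_{\{S_{k-1}\le s<S_{k-1}+\eta_k\}}\ge 0$, one has
\[
N(s)-V(s)=\big(\nu(s)-U(s)\big)-\big(D(s)-\me D(s)\big).
\]
The first summand, normalised by $\sqrt{{\tt m}^{-3}{\tt s}^2t}$, converges in $J_1$ on $D$ to $(B(u))_{u\ge 0}$: this is the renewal FCLT, a standard consequence of Donsker's invariance principle for $(S_n)$ via the inversion argument, and by Lorden's inequality \eqref{lord} it holds verbatim with the centring $U(ut)$. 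As the limit is continuous, \eqref{convN} follows once we show that the second summand is uniformly negligible on compacts: $\sup_{u\in[0,T]}t^{-1/2}|D(ut)-\me D(ut)|\tp 0$ for every $T>0$.

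To handle $D-\me D$ I would condition on the driving random walk $(S_n)$. Writing $G(y):=\mmp\{\eta\le y\}$, $\bar G:=1-G$, and using that $\eta_k$ is independent of $S_{k-1}$ for each $k$, conditioning yields
\[
D(s)-\me D(s)=\int_{[0,\,s]}\bar G(s-y)\,{\rm d}\big(\nu(y)-U(y)\big)+\Pi(s),
\]
where $\Pi(s)=\pm\big(N(s)-\me[N(s)\mid(S_n)]\big)$ collects the fluctuations of $D(s)$ around its conditional mean and is, for each fixed $s$, the terminal value of a mean-zero $L^2$-martingale in the $\eta$-randomness (in the genuinely dependent case one gets a second such martingale, from the discrepancy between the conditional and unconditional laws of $\eta$; it is treated identically). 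The first summand is a fixed averaging of the centred renewal process $\nu-U$ against the probability measure ${\rm d}G$; since the rescaled $\nu(\cdot\,t)-U(\cdot\,t)$ is tight with continuous limit, smoothing it by a fixed distribution is asymptotically the identity, so this summand is $o_\mmp(\sqrt t)$ uniformly on compacts --- the part of the averaging over $\{z>\delta t\}$ being bounded via $\sup_{v\le Tt}|\nu(v)-U(v)|=O_\mmp(\sqrt t)$ together with $\bar G(\delta t)=o((\delta t)^{-a})$, and the part over $\{z\le\delta t\}$ via the small modulus of continuity of the rescaled $\nu-U$; one then lets $\delta\downarrow 0$.

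For the martingale summand $\Pi$ the key inequality is the orthogonality bound $\me\,\Pi(s)^2\le\int_{[0,\,s]}\bar G(s-y)\,{\rm d}U(y)=U(s)-V(s)$. By the computation of $V(t)-{\tt m}^{-1}t$ in Section \ref{aux21} (cf.\ \eqref{lord2}--\eqref{lord3}), $U(s)-V(s)\le c_U+{\tt m}^{-1}\int_0^s\bar G(y)\,{\rm d}y$, which is $O(s^{1-a})$ for $a\in(0,1)$ and $O(1)$ for $a\ge 1$; in every case $U(s)-V(s)=o(s)$ for \emph{any} $a>0$, however small. This is essentially the only place where the hypothesis $\me\eta^a<\infty$ is used nontrivially, and it gives $t^{-1/2}\Pi(ut)\tp 0$ for each fixed $u$; combined with the previous paragraph this already yields convergence of the finite-dimensional distributions in \eqref{convN}, in particular the one-dimensional limit of Corollary \ref{cor onedim}.

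The main obstacle is upgrading this to the uniform statement $\sup_{u\in[0,T]}t^{-1/2}|\Pi(ut)|\tp 0$: the process $u\mapsto\Pi(ut)$ is not monotone, a union bound over a partition loses a $t$-dependent factor, and a crude comparison with the increments of $\nu$ fails because $U-V$ may grow faster than $\sqrt t$. I would instead combine the $L^2$-bound above with the structure of $\Pi$ as (up to sign) a difference of two nondecreasing processes of equal mean $V$, and with Doob's maximal inequality for the underlying martingale (in the $\eta$-index, for each fixed $s$), letting the partition mesh shrink with $t$ at the rate dictated by the two estimates. Once this uniform bound is in hand, \eqref{convN} follows by adding the two limiting processes, using continuity of addition at continuous limits in $(D,J_1)$. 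Everything except this last uniform control of $\Pi$ for arbitrarily small $a$ is routine.
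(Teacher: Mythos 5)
Your reduction is a genuinely different route from the paper's, but it has a real gap at exactly the point you flag, and the tools you name do not close it. The uniform statement $\sup_{u\in[0,T]}t^{-1/2}|\Pi(ut)|\tp 0$ is the entire difficulty of the proposition: up to sign, $\Pi(ut)=N(ut)-\sum_{k\geq 0}G(ut-S_k)\1_{\{S_k\leq ut\}}$ is precisely the process $Z_t(u)$ analysed in Proposition \ref{suprem} of the paper, and there a full dyadic chaining argument (after Resnick--Rootz\'{e}n) based on the increment bound \eqref{eq:ineq} only yields $\me\sup_{u\in[0,1]}Z_t(u)^2=O(t)$, i.e.\ $O_{\mmp}(\sqrt t)$ for the supremum --- not the $o_{\mmp}(\sqrt t)$ your scheme requires. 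Your fixed-$u$ variance bound $\me\,\Pi(s)^2\leq U(s)-V(s)=O(s^{1-a})$ is correct, but it does not propagate to the supremum by the devices you mention: Doob's maximal inequality in the $\eta$-index at a \emph{fixed} time $s$ controls partial sums over $k$, which is not what is needed (the full sum at fixed $s$ is already controlled), and it says nothing about the supremum over $s$, where $s\mapsto\Pi(s)$ is neither a martingale nor monotone; a partition with $t$-dependent mesh requires an increment estimate of the type \eqref{eq:ineq} together with a chaining scheme, and even then one must treat the oscillation within the finest cells more delicately than in Proposition \ref{suprem} to upgrade $O_{\mmp}(\sqrt t)$ to $o_{\mmp}(\sqrt t)$ when $a<1/2$ (the regime where $U-V$ may grow like $t^{1-a}\gg\sqrt t$). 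So the decisive maximal inequality is asserted, not proved.

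For comparison, the paper does not prove this uniform negligibility at all in the proof of Proposition \ref{prConvN}: it invokes part (B1) of Theorem 3.2 in Alsmeyer--Iksanov--Marynych (2017), which gives the functional CLT for $N$ with centering ${\tt m}^{-1}\int_0^{ut}G(y)\,{\rm d}y$, and then replaces that centering by $V(ut)$ using the uniform bound $0\leq V(s)-{\tt m}^{-1}\int_0^{s}G(y)\,{\rm d}y\leq c_U$ coming from Lorden's inequality \eqref{lord}. In other words, the hard core of what you are trying to establish from scratch (negligibility of the fluctuations of $N$ around its conditional mean given the walk, uniformly on compacts, under only $\me\eta^a<\infty$) is exactly the content of the cited external theorem. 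Your treatment of the other two pieces --- the renewal FCLT for $\nu-U$ with Lorden swap of centerings, and the smoothing term $\int_{[0,\,s]}(1-G(s-y))\,{\rm d}(\nu-U)(y)$ handled by splitting at $\delta t$ and using tightness/modulus of continuity of the rescaled $\nu-U$ --- is sound, but without a complete proof of the maximal bound for $\Pi$ the argument is incomplete. To repair it you would either cite the Alsmeyer--Iksanov--Marynych result (as the paper does) or carry out a chaining argument in the spirit of Proposition \ref{suprem}, sharpened at the finest scale so as to produce $o_{\mmp}(\sqrt t)$ rather than $O_{\mmp}(\sqrt t)$.
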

\begin{proof}
According to part (B1) of Theorem 3.2 in \cite{Alsmeyer+Iksanov+Marynych:2017},
$$\Big(\frac{N(ut)-{\tt m}^{-1}\int_0^{ut}G(y){\rm d}y}{\sqrt{{\tt m}^{-3}
{\tt s}^2 t}}\Big)_{u\geq 0}~\Rightarrow~(B(u))_{u\geq 0},\quad t\to\infty$$ in the $J_1$-topology on $D$, where, as before, $G$ is the distribution function of $\eta$. Thus, it is enough to show that, for all $T>0$,
\begin{equation}\label{aux231}
\lim_{t\to\infty} t^{-1/2}\sup_{u\in[0,\,T]}\Big|V(ut)-{\tt m}^{-1}\int_0^{ut}G(y){\rm d}y\Big|=0.
\end{equation}
According to \eqref{lord}, for $u\in[0,T]$ and $t\geq 0$,
$$0\leq V(ut)-{\tt m}^{-1}\int_0^{ut} G(y){\rm d}y= \int_{[0,\,ut]}(U(tu-y)-{\tt m}^{-1}(tu-y)){\rm d}G(y)\leq c_U,$$ and \eqref{aux231} follows.
\end{proof}

\begin{assertion}\label{suprem}
Assume that ${\tt m}=\me\xi<\infty$, ${\tt s}^2={\rm Var}\, \xi\in (0,\infty)$ and $\me\eta^a<\infty$ for some $a>0$. Then
\begin{equation*}
\me \sup_{s\in [0,\,t]}(N(s)-V(s))^2=O(t),\quad t\to\infty.
\end{equation*}
\end{assertion}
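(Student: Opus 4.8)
The plan is to reduce the second-moment bound for the running maximum to a bound on the $L^2$-norm of $N(t)-V(t)$ at the terminal time, using a maximal inequality. The natural tool is a submartingale-type argument: although $(N(t)-V(t))_{t\geq 0}$ is not a martingale, one can exploit the renewal structure. First I would recall that $N(t)\leq \nu(t):=\sum_{i\geq 0}\1_{\{S_i\leq t\}}$ a.s., where $(\nu(t))_{t\geq 0}$ is the first-passage counting process of the standard random walk $(S_n)$. By Proposition \ref{prConvN} (more precisely, its proof, which cites part (B1) of Theorem 3.2 in \cite{Alsmeyer+Iksanov+Marynych:2017}) we already know the one-dimensional fluctuations of $N(t)-V(t)$ are of order $\sqrt{t}$; the point here is to upgrade this to an $L^2$-bound on the supremum.

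The key steps, in order, are as follows. (i) Write $N(s)-V(s)=(N(s)-\nu(s))+(\nu(s)-\E\nu(s))+(\E\nu(s)-V(s))$. The third term is deterministic; since $\E\nu(s)=U(s)$ and $V(s)\leq U(s)$ with $U(s)-V(s)=\E\nu(s)-V(s)$ bounded by a quantity controlled via \eqref{lord1} and the estimates around \eqref{lord3} — in fact $0\leq U(s)-V(s)$ need not be bounded in general, but $\sup_{s\le t}|\E\nu(s)-V(s)|$ grows at most like $t^{1-a}\vee 1$ when $\E\eta^a<\infty$, which is $o(\sqrt t)$ in the squared sense only if $a>1/2$, so more care is needed here. (ii) For the term $\nu(s)-U(s)$ I would invoke the classical fact that $\E\sup_{s\le t}(\nu(s)-U(s))^2=O(t)$; this follows from Doob's $L^2$-inequality applied to the martingale obtained by compensating $\nu$, together with $\operatorname{Var}\nu(t)=O(t)$ under $\E\xi^2<\infty$ (see Gut \cite{Gut:2009}, Chapter for renewal processes, and compare Lemma \ref{second}). (iii) For the term $N(s)-\nu(s)$ — the "overshoot deficit" coming from the perturbation $\eta$ — I would represent it through $G$ and bound its running maximum: $0\le \nu(s)-N(s)$ and its expectation is $\E\nu(s)-V(s)=U(s)-V(s)$, so $\E\sup_{s\le t}(\nu(s)-N(s))^2$ requires a monotonicity or direct moment argument using $\E\eta^a<\infty$.

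The main obstacle I anticipate is controlling the perturbation term $\nu(s)-N(s)$ and the deterministic gap $U(s)-V(s)$ when only $\E\eta^a<\infty$ for small $a>0$: naively these are of order $t^{1-a}$, whose square is $t^{2-2a}\gg t$. The resolution — and the technical heart of the proof — will be to observe that one should not split off $\nu$ at all, but instead compare $N(s)$ directly with a shifted renewal process. Concretely, I would use the identity $N(t)=\sum_{i\ge 0}\1_{\{S_i+\eta_{i+1}\le t\}}$ (with $\eta_{i+1}$ the perturbation attached to the $i$th renewal) and condition on the sequence $(\eta_i)$; given the $\eta$'s, $N$ is still a thinned/shifted point process adapted to the filtration generated by the $\xi$'s, and one gets a Doob decomposition of $N(t)$ whose predictable compensator is $\sum_{i}\,\mmp\{S_{i}+\eta_{i+1}\le t\mid \xi_1,\dots,\xi_i\}$. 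Applying Doob's $L^2$-maximal inequality to the resulting martingale, and bounding the predictable quadratic variation by $U(t)=O(t)$ (each renewal contributes at most one jump), yields $\E\sup_{s\le t}(N(s)-A(s))^2=O(t)$ for the compensator $A$; it then remains to check $\sup_{s\le t}|A(s)-V(s)|^2=O(t)$, which follows because $A(s)-V(s)$ is again a centered sum over renewals with bounded increments, handled by the same $L^2$-bound. I would also double-check the edge case where the bound from \cite{Alsmeyer+Iksanov+Marynych:2017} already packages some of this; if so, the cleanest route is to combine their functional CLT (which gives tightness in $D$, hence control of $\sup_{s\le t}|N(s)-V(s)|/\sqrt t$ in probability) with a uniform-integrability argument for $\big(\sup_{s\le t}(N(s)-V(s))^2/t\big)_{t\ge 1}$, the latter being exactly what the Doob/renewal moment bound above supplies.
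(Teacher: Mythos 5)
Your decomposition is in fact the same one the paper uses: writing $N(s)-V(s)=\big(N(s)-A(s)\big)+\big(A(s)-V(s)\big)$ with $A(s)=\sum_{k\geq 0}G(s-S_k)$ is exactly the split into \eqref{11} and \eqref{12}, and you were right to abandon the comparison with $\nu$, which indeed fails for small $a$. The genuine gap is in your key step: you treat $A$ as the predictable compensator of $N$ in the time variable $s$ and apply Doob's $L^2$-maximal inequality to $N-A$. But $s\mapsto\sum_{k\geq 0}\big(\1_{\{S_k+\eta_{k+1}\leq s\}}-G(s-S_k)\big)$ is not a martingale in $s$ with respect to any natural filtration. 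Already for a single point, $\1_{\{\tau\leq s\}}-G(s)$ with $\tau\sim G$ is not a martingale; its compensator is the hazard integral $\int_0^{s\wedge\tau}(1-G(u-))^{-1}\,{\rm d}G(u)$, not $G(s)$. Concretely, once the point $S_k+\eta_{k+1}$ has occurred by time $s$, the corresponding summand equals $1-G(s-S_k)$ and keeps decreasing, so it is a strict supermartingale, while the unoccurred summands conditionally overcompensate; the sum has constant mean zero but no martingale property, and Doob's inequality is simply not available. This is precisely why the paper does not use a maximal inequality but a dyadic chaining argument in the spirit of Resnick--Rootz\'{e}n: the increment bound \eqref{eq:ineq}, $\E(Z_t(u)-Z_t(v))^2\leq 2\E\nu(1)\,a((u-v)t)$ with $a(t)=\sum_{k=0}^{[t]+1}(1-G(k))$, obtained from Lemma \ref{impo1}, is summed over dyadic scales down to the scale $2^{-I}t\asymp t_1$, and the oscillation within blocks of the finest scale is bounded separately (the terms $J_1$, $J_{21}$, $J_{22}$). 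If you replace $A$ by the true hazard compensator you do recover a martingale for one piece, but then the gap between the hazard compensator and $\sum_k G(\cdot-S_k)$ (or $V$) is again a centered, non-martingale process whose running supremum must be controlled, so the difficulty reappears rather than disappears.

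Two further points. Your claim that $\sup_{s\leq t}|A(s)-V(s)|$ is ``handled by the same $L^2$-bound'' is unsupported for the same reason ($A-V$ is not a martingale in $s$ either); the paper disposes of \eqref{12} by noting that the proof of Lemma 4.2(b) in Gnedin--Iksanov works for an arbitrary distribution of $\eta$. Finally, your fallback — combining the functional CLT with a uniform-integrability argument — is circular: uniform integrability of $\sup_{s\leq t}(N(s)-V(s))^2/t$ is essentially the assertion of Proposition \ref{suprem} itself, and the functional limit theorem only yields control of the supremum in probability, not in $L^2$.
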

\begin{proof}
In the case $\me\eta<\infty$ this limit relation is proved in Lemma 4.2(b) of \cite{Gnedin+Iksanov:2020}.

From now on we assume that $\me\eta^a<\infty$ for some $a\in (0,1)$ and $\me\eta=\infty$. As in the proof of Lemma 4.2(b) of \cite{Gnedin+Iksanov:2020} we
shall use a decomposition $$N(t)-V(t)=\sum_{k\geq 0}(\1_{\{S_k+\eta_{k+1}\leq t\}}-G(t-S_k))+\sum_{k\geq 0}G(t-S_k)-V(t),$$ where $G$ is the distribution function of $\eta$.
It suffices to prove that
\begin{equation}\label{11}
\me\Big[\sup_{s\in [0,\,t]}\Big(\sum_{k\geq
0}(\1_{\{S_k+\eta_{k+1}\leq s\}}-G(s-S_k))\Big)^2\Big]=O(t),\quad t\to\infty
\end{equation}
and
\begin{equation}\label{12}
\me\Big[\sup_{s\in [0,\,t]}\Big(\sum_{k\geq 0}G(t-S_k)-V(t)\Big)^2\Big]=O(t),\quad t\to\infty.
\end{equation}
The proof of \eqref{12} given in \cite{Gnedin+Iksanov:2020} goes through for any distribution of $\eta$ and as such applies without changes under the present assumptions.
On the other hand, the proof of \eqref{11} given in \cite{Gnedin+Iksanov:2020} depends crucially on the assumption $\me\eta<\infty$ imposed in Lemma 4.2(b) of the cited article. Thus, another argument has to be found.

For $u,t\geq 0$, put
$$Z_t(u):=\sum_{k\geq 0}\big(\1_{\{S_k+\eta_{k+1}\leq ut\}}-G(ut-S_k))\1_{\{S_k\leq ut\}},$$ so that $$\me\Big[\sup_{s\in [0,\,t]}\Big(\sum_{k\geq 0}(\1_{\{S_k+\eta_{k+1}\leq s\}}-G(s-S_k))\Big)^2\Big]=\me [\sup_{u\in [0,\,1]}(Z_t(u))^2].$$ In what follows we write $\sup_{u\in K}$ when the supremum is taken over an uncountable set $K$ and $\max_{m\leq k\leq n}$ when the maximum is taken over the discrete set $\{m,m+1,\ldots, n\}$.   We start by observing that, for positive integer $I=I(t)$ to be chosen later in \eqref{it},
\begin{multline*}
\sup_{u\in [0,\,1]}|Z_t(u)|=\max_{0\leq k\leq 2^I-1}\sup_{u\in [0,\,2^{-I}]}\big|Z_t(k2^{-I}+u)-Z_t(k2^{-I})+Z_t(k2^{-I})\big|\\\leq \max_{0\leq k\leq 2^I}|Z_t(k2^{-I})|+\max_{0\leq k\leq 2^I-1}\sup_{u\in [0,\,2^{-I}]}|Z_t(k2^{-I}+u)-Z_t(k2^{-I})|.
\end{multline*}
We have used subadditivity of the supremum for the last inequality. We proceed as on p.~764 in \cite{Resnick+Rootzen:2000}. Put $F_j:=\{k2^{-j}:0\leq k\leq 2^j\}$ for $j\in\mn_0$ and fix $u\in F_I$. Now define $u_j:=\max\{w\in F_j: w\leq u\}$ for nonnegative integer $j\leq I$. Then $u_{j-1}=u_j$ or $u_{j-1}=u_j-2^{-j}$. With this at hand, $$|Z_t(u)|=\Big|\sum_{j=1}^I (Z_t(u_j)-Z_t(u_{j-1}))+Z_t(u_0)\Big|\leq \sum_{j=0}^I \max_{1\leq k\leq 2^j}|Z_t(k2^{-j})-Z_t((k-1)2^{-j})|.$$ Combining the fragments together we arrive at the inequality which a starting point of our subsequent work $$\sup_{u\in [0,\,1]}|Z_t(u)|\leq \sum_{j=0}^I \max_{1\leq k\leq 2^j}|Z_t(k2^{-j})-Z_t((k-1)2^{-j})|+\max_{0\leq k\leq 2^I-1}\sup_{u\in [0,\,2^{-I}]}|Z_t(k2^{-I}+u)-Z_t(k2^{-I})|.$$ Thus, \eqref{11} follows if we can show that
\begin{equation}\label{tight1}
\me \bigg(\sum_{j=0}^I \max_{1\leq k\leq 2^j}|Z_t(k2^{-j})-Z_t((k-1)2^{-j})|\bigg)^2=O(t),\quad t\to\infty
\end{equation}
and that
\begin{equation}\label{tight2}
\me [\max_{0\leq k\leq 2^{I}-1}\sup_{u\in [0,\,2^{-I}]}(Z_t(k2^{-I}+u)-Z_t(k2^{-I}))^2]=O(t),\quad t\to\infty.
\end{equation}

We intend to prove that, for $u,v\geq 0$, $u>v$ and $t\geq 0$,
\begin{equation}\label{eq:ineq}
\me (Z_t(u)-Z_t(v))^2\leq 2\me \nu(1) a((u-v)t),
\end{equation}
where, for $t\geq 0$, $a(t):=\sum_{k=0}^{[t]+1}(1-G(k))$ and $\nu(t)=\inf\{k\in\mn: S_k>t\}=\sum_{k\geq 0}\1_{\{S_k\leq t\}}$. Indeed,
\begin{multline*}
\me (Z_t(u)-Z_t(v))^2=\int_{(vt,\,ut]}G(ut-y)(1-G(ut-y)){\rm d}\nu(y)\\+\int_{[0,\,vt]}(G(ut-y)-G(vt-y))(1-G(ut-y)+G(vt-y)){\rm d}\nu(y)\\\leq \int_{(vt,\,ut]}(1-G(ut-y)){\rm
d}\nu(y)+\int_{[0,\,vt]}(G(ut-y)-G(vt-y)){\rm d}\nu(y).
\end{multline*}
Using Lemma \ref{impo1} with
$f(y)=(1-G(y))\1_{[0,\,(u-v)t)}(y)$ and $f(y)=G((u-v)t+y)-G(y)$,
respectively, we obtain
\begin{multline}
\me \int_{(vt,\,ut]}(1-G(ut-y)){\rm d}\nu(y)=\me \int_{[0,\,ut]}(1-G(ut-y))\1_{[0,\,(u-v)t)}(ut-y){\rm d}\nu(y)\\\leq \me \nu(1) \sum_{n=0}^{[ut]}\sup_{y\in
[n,\,n+1)}((1-G(y))\1_{[0,\,(u-v)t)}(y))\\ \leq \me \nu(1) \sum_{n=0}^{[(u-v)t]}(1-G(n))\leq \me \nu(1) a((u-v)t) \label{11a}
\end{multline}
and
\begin{multline}
\me \int_{[0,\,vt]}(G(ut-y)-G(vt-y)){\rm d}\nu(y)\leq \me \nu(1) \sum_{n=0}^{[vt]}\sup_{y\in [n,\,n+1)}(G((u-v)t+y)-G(y))\\ \leq \me \nu(1) \Big(\sum_{n=0}^{[vt]}(1-G(n))-\sum_{n=0}^{[vt]}(1-G((u-v)t+n+1))\Big)\\ \leq \me \nu(1) \Big(\sum_{n=0}^{[vt]}(1-G(n))-\sum_{n=0}^{[ut]+1}(1-G(n))+\sum_{n=0}^{[(u-v)t]+1}(1-G(n))\Big) \\ \leq
\me \nu(1) a((u-v)t).\label{11b}
\end{multline}
Combining \eqref{11a} and \eqref{11b} yields \eqref{eq:ineq}.

\noindent {\sc Proof of \eqref{tight1}}. The assumption $\me \eta^a<\infty$ entails $\lim_{t\to\infty}t^a(1-G(t))=0$ and thereupon, given $C>0$ there exists $t_1>0$ such that $a(t)\leq Ct^{1-a}$ whenever $t\geq t_1$. Using this in combination with \eqref{eq:ineq} yields
\begin{equation}\label{ineq}
\me (Z_t(k2^{-j})-Z_t((k-1)2^{-j}))^2 \leq 2C\me \nu(1) 2^{-j(1-a)}=:C_12^{-j(1-a)}
\end{equation}
whenever $2^{-j}t\geq t_1$. Let $I=I(t)$ denote the integer number
satisfying
\begin{equation}\label{it}
2^{-I}t\geq t_1>2^{-I-1}t.
\end{equation}
Then the inequalities \eqref{ineq} and
\begin{eqnarray*}
\me [(\max_{1\leq k\leq
2^j}(Z_t(k2^{-j})-Z_t((k-1)2^{-j}))^2]&\leq&
\sum_{k=1}^{2^j}\me (Z_t(k2^{-j})-Z_t((k-1)2^{-j}))^2 \\&\leq& C_1 2^{aj}
\end{eqnarray*}
hold whenever $j\leq I$. Invoking the triangle inequality for the $L_2$-norm yields
\begin{multline*}
\me \bigg(\sum_{j=0}^I \max_{1\leq k\leq 2^j}|Z_t(k2^{-j})-Z_t((k-1)2^{-j})|\bigg)^2 \leq \bigg(\sum_{j=0}^I (\me [\max_{1\leq k\leq
2^j}(Z_t(k2^{-j})-Z_t((k-1)2^{-j}))^2]^{1/2}\bigg)^2 \\\leq C_1\bigg(\sum_{j=0}^I 2^{aj/2}\bigg)^2=\big(O\big(2^{aI/2}\big)\big)^2=O(t^a),\quad t\to\infty.
\end{multline*}
Here, the last equality is ensured by the choice of $I$.

\noindent {\sc Proof of \eqref{tight2}}. We shall use a
decomposition
\begin{multline*}
Z_t(k2^{-I}+u)-Z_t(k2^{-I})\\=\sum_{j\geq 0}\big(\1_{\{S_j+\eta_{j+1}\leq
(k2^{-I}+u)t\}}-G((k2^{-I}+u)t-S_j)\big)\1_{\{k2^{-I}t<S_j\leq (k2^{-I}+u)t\}}\\+ \sum_{j\geq 0}\big(\1_{\{k2^{-I}t<S_j+\eta_{j+1}\leq
(k2^{-I}+u)t\}}-(G((k2^{-I}+u)t-S_j)-G(k2^{-I}t-S_j))\big)\1_{\{S_j\leq k2^{-I}t\}}\\=: J_1(t,k,u)+J_2(t,k,u).
\end{multline*}
It suffices to prove that, for $i=1,2$,
\begin{equation}\label{33}
\me [\max_{0\leq k\leq 2^I-1}\sup_{u\in [0,\,2^{-I}]}(J_i(t,k,u))^2]=O(t),\quad t\to\infty.
\end{equation}

\noindent {\sc Proof of \eqref{33} for $i=1$}. Since
$|J_1(t,k,u)|\leq \nu((k2^{-I}+u)t)-\nu(k2^{-I}t)$ and $t\mapsto \nu(t)$ is
a.s.\ nondecreasing we infer $\sup_{u\in [0,\, 2^{-I}]} |J_1(t,k,u)|\leq \nu ((k+1)2^{-I}t)-\nu(k2^{-I}t)$ a.s. Hence,
\begin{multline*}
\me [\max_{0\leq k\leq 2^I-1}\sup_{u\in [0,\,2^{-I}]}(J_i(t,k,u))^2]\leq \me [\max_{0\leq k\leq 2^I-1}(\nu((k+1)2^{-I}t)-\nu(k2^{-I}t))^2]\\ \leq \me \sum_{k=0}^{2^I-1} \me (\nu((k+1)2^{-I}t)-\nu(k2^{-I}t))^2\leq 2^I \me (\nu(2^{-I}t))^2\leq (t/t_1)\me (\nu(2t_1))^2=O(t),\quad t\to\infty.
\end{multline*}
Here, the second inequality follows from distributional subadditivity of $\nu(t)$ (see, for instance, formula (5.7) on p.~58 in \cite{Gut:2009}), and the third inequality is secured by the choice of $I$.

\noindent {\sc Proof of \eqref{33} for $i=2$}. We have
\begin{multline*}
\sup_{u\in [0,\, 2^{-I}]} |J_2(t,k,u)|\leq \sup_{u\in [0,\, 2^{-I}]} \bigg(\sum_{j\geq 0}\1_{\{k2^{-I}t<S_j+\eta_{j+1}\leq
(k2^{-I}+u)t\}}\1_{\{S_j\leq k2^{-I}t\}}\\+\sum_{j\geq
0}(G((k2^{-I}+u)t-S_j)-G(k2^{-I}t-S_j))\big)\1_{\{S_j\leq k2^{-I}t\}}\bigg)\\\leq \sum_{j\geq
0}\1_{\{k2^{-I}t<S_j+\eta_{j+1}\leq (k+1)2^{-I}t\}}\1_{\{S_j\leq
k2^{-I}t\}}\\+\sum_{j\geq 0}(G(((k+1)2^{-I})t-S_j)-G(k2^{-I}t-S_j))\1_{\{S_j\leq k2^{-I}t\}}\\\leq \Big|\sum_{j\geq 0}\big(\1_{\{k2^{-I}t<S_j+\eta_{j+1}\leq
(k+1)2^{-I}t\}}-(G(((k+1)2^{-I})t-S_j)-G(k2^{-I}t-S_j))\big)\1_{\{S_j\leq k2^{-I}t\}}\Big|\\+ 2\sum_{j\geq 0}(G(((k+1)2^{-I})t-S_j)-G(k2^{-I}t-S_j))\big)\1_{\{S_j\leq k2^{-I}t\}}\\=:J_{21}(t,k)+2J_{22}(t,k).
\end{multline*}

Using \eqref{11b} with $u=(k+1)2^{-I}$ and $v=k2^{-I}$ we obtain $$\me
(J_{22}(t,k))^2 \leq \me (\nu(1))^2 (a(2^{-I}t))^2 \leq \me
(\nu(1))^2 (a(2t_1))^2$$ which implies
\begin{eqnarray*}
\me (\max_{0\leq k\leq 2^{I}-1}J_{22}(t,k))^2 &\leq& 2^I \max_{0\leq k\leq 2^I-1}\me
(J_{22}(t,k))^2 \\&\leq& (t/t_1) \me (\nu(1))^2 (a(2t_1))^2=O(t),\quad t\to\infty.
\end{eqnarray*}
Further, by \eqref{11b},
\begin{equation*}
\me (J_{21}(t,k))^2= \me \int_{[0,\,k2^{-I}t]}(G((k+1)2^{-I}t-y)-G(k2^{-I}t-y)){\rm d}\nu(y)\leq \me \nu(1) a(2^{-I}t).
\end{equation*}
Hence, $\me (\max_{0\leq k\leq 2^{I}-1}J_{21}(t,k))^2=O(t)$ by the same reasoning as above, and \eqref{33} for $i=2$ follows. The proof of Proposition \ref{suprem} is complete.
\end{proof}

\section{Proofs of the main results}\label{pro}

\subsection{Proofs of Proposition \ref{elem} and Theorem \ref{rate}}

\begin{proof}[Proof of Proposition \ref{elem}]
The simplest way to prove this is to use Laplace transforms. Indeed, for fixed $j\in\mn$, $$\int_{[0,\,\infty)}e^{-st}{\rm d}V_j(t)=\Big(\frac{\me e^{-s\eta}}{1-\me e^{-s\xi}}\Big)^j~\sim~ \frac{1}{{\tt m}^js^j},\quad s\to 0+.$$ By Karamata's Tauberian theorem (Theorem 1.7.1 in \cite{Bingham+Goldie+Teugels:1989}), \eqref{elem1} holds.
\end{proof}

\begin{proof}[Proof of Theorem \ref{rate}]
We use induction on $j$. Let $j=1$. Write
\begin{equation}\label{I}
V(t)-{\tt m}^{-1}t=\int_{[0,\,t]}(U(t-y)-{\tt m}^{-1}(t-y)){\rm d}G(y)-{\tt m}^{-1}\int_0^t (1-G(y)){\rm d}y.
\end{equation}
Plainly, the second term converges to $-{\tt m}^{-1}\me \eta$ as $t\to\infty$. It is a simple consequence of the Blackwell theorem that $$\lim_{t\to\infty}(U(t)-{\tt m}^{-1}t)= (2 {\tt m}^2)^{-1}\me \xi^2=b_U.$$ Using this in combination with \eqref{lord2} we invoke Lebesgue's dominated convergence theorem to infer that the first term in \eqref{I} converges to $b_U$ as $t\to\infty$. Thus, we have shown that \eqref{asymp1} with $j=1$ holds true.

Assume that \eqref{asymp1} holds for $j=k$. In view of \eqref{asymp1} with $j=1$, given $\varepsilon>0$ there exists $t_0>0$ such that
\begin{equation}\label{bound}
|V(t)-{\tt m}^{-1}t-b_V|\leq \varepsilon
\end{equation}
whenever $t\geq t_0$. Write, for $t\geq t_0$,
\begin{multline*}
V_{k+1}(t)-\frac{t^{k+1}}{(k+1)!{\tt m}^{k+1}}=\int_{[0,\,t-t_0]}(V(t-y)-{\tt m}^{-1}(t-y)){\rm d}V_k(y)\\+\int_{(t-t_0,\,t]}(V(t-y)-{\tt m}^{-1}(t-y)){\rm d}V_k(y)+{\tt m}^{-1}\int_0^t\Big(V_k(y)-\frac{y^k}{k!{\tt m}^k}\Big){\rm d}y=I_1(t)+I_2(t)+I_3(t).
\end{multline*}
In view of \eqref{bound}, $$(b_V-\varepsilon)V_k(t-t_0)\leq I_1(t)\leq (b_V+\varepsilon)V_k(t-t_0),$$ whence $$\frac{b_V-\varepsilon}{k!{\tt m}^k}\leq {\lim\inf}_{t\to\infty}\frac{I_1(t)}{t^k}\leq {\lim\sup}_{t\to\infty}\frac{I_1(t)}{t^k}\leq \frac{b_V+\varepsilon}{k!{\tt m}^k}$$ by Proposition \ref{elem}.

Using \eqref{lord2} we obtain $|I_2(t)|\leq c_V (V_k(t)-V_k(t-t_0))$ for all $t\geq t_0$, whence $\lim_{t\to\infty}t^{-k}I_2(t)=0$ by Theorem \ref{blackwell_early}. A combination of this with the last centered formula and sending $\varepsilon\to 0+$ we infer $$I_1(t)+I_2(t)~\sim~\frac{b_V t^k}{k!{\tt m}^k},\quad t\to\infty.$$ Finally, by the induction assumption and  L'H\^{o}pital's rule $$I_3(t)\sim \frac{b_V t^k}{(k-1)!{\tt m}^k},\quad t\to\infty.$$ Combining fragments together we arrive at \eqref{asymp1} with $j=k+1$.
\end{proof}

\subsection{Proofs of Theorems \ref{keyren} and \ref{blackwell_early}}\label{pro1}

\begin{proof}[Proof of Theorem \ref{keyren}]
The proof of Theorem 2.7 (a) in \cite{Bohun etal:2021} applies, with obvious simplifications. Note that for early generations ($j$ is fixed), asymptotic relation \eqref{elem1} holds under the sole assumption ${\tt m}<\infty$. This is not the case for intermediate generations ($j=j(t)\to\infty$, $j(t)=o(t)$ as $t\to\infty$) treated in \cite{Bohun etal:2021} which explains the appearance of the additional assumption $\me\xi^r<\infty$ for some $r\in (1,2]$ in Theorem 2.7 of the cited paper.
\end{proof}

\begin{proof}[Proof of Theorem \ref{blackwell_early}]
When $j=1$, relation \eqref{bla_early} holds by Lemma 4.2 (a) in \cite{Bohun etal:2021}.
Write
\begin{multline*}
V_j(t+h)-V_j(t)=\int_{[0,\,t]}(V(t+h-y)-V(t-y)){\rm d}V_{j-1}(y)+\int_{(t,\,t+h]}V(t+h-y){\rm d}V_{j-1}(y)\\=:A_j(t)+B_j(t).
\end{multline*}
We first show that the contribution of $B_j(t)$ is negligible. Indeed, using monotonicity of $V$ and $\lim_{t\to\infty}(V_j(t+h)/V_j(t))=1$ (see Proposition \ref{elem}) we obtain $$B_j(t)\leq V(h)(V_{j-1}(t+h)-V_{j-1}(t))=o(V_{j-1}(t)),\quad t\to\infty.$$ In view of \eqref{bla_early} with $j=1$, given $\varepsilon>0$ there exists $t_0>0$ such that $$|V(t+h)-V(t)-{\tt m}^{-1}h|\leq \varepsilon$$ whenever $t\geq t_0$. Thus, % With this $t_0$,
we have, for $t\geq t_0$, $$A_j(t)=\int_{[0,\,t-t_0]}(V(t+h-y)-V(t-y)){\rm d}V_{j-1}(y)+\int_{(t-t_0,\,t]}V(t+h-y){\rm d}V_{j-1}(y)=:A_{j,1}(t)+A_{j,2}(t).$$ By the argument used for $B_j(t)$ we infer $A_{j,2}(t)=o(V_{j-1}(t))$. Further, $A_{j,1}(t)\leq ({\tt m}^{-1}h+\varepsilon)V_{j-1}(t-t_0)$, whence $$\lim\sup_{t\to\infty}(A_j(t)/V_{j-1}(t))\leq {\tt m}^{-1}h.$$ A symmetric argument proves the converse inequality for the limit inferior. Invoking Proposition \ref{elem} completes the proof of Theorem \ref{blackwell_early}.
\end{proof}

\subsection{Proof of Theorem \ref{thm:variance}}

For $j\in\mn$ and $t\geq 0$, put $D_j(t):={\rm Var}\,N_j(t)$. Recall that, as a consequence of the assumption $\eta=\xi$ a.s., $T_r=S_r$ for $r\in\mn$ and $V(t)=\tilde U(t)=\sum_{r\geq 1}\mmp\{S_r\leq t\}$ for $t\geq 0$. However, we prefer to write $V_j$ rather than $\tilde U_j$.

Let $j\in\mn$, $j\geq 2$. We obtain with the help of \eqref{basic1232}
\begin{eqnarray}
N_j(t)-V_j(t)&=&\sum_{r\geq 1}\big(N^{(r)}_{j-1}(t-S_r)-V_{j-1}(t-S_r)\big)\1_{\{S_r\leq t\}}\notag\\&+& \bigg(\sum_{r\geq 1}V_{j-1}(t-S_r)\1_{\{S_r\leq t\}}-V_j(t)\bigg)=:N_{j,\,1}(t)+N_{j,\,2}(t),\quad t\geq 0,\label{defNj}
\end{eqnarray}
whence
\begin{equation}\label{aux5}
D_j(t)=\E (N_{j,\,1}(t))^2+\E (N_{j,\,2}(t))^2.
\end{equation}
We start by showing that the following asymptotic relations hold, for $j\geq 2$, as $t\to\infty$,
\begin{multline}\label{aux3}
\E (N_{j,\,2}(t))^2={\rm Var}\,\Big(\sum_{r\geq 1}V_{j-1}(t-S_r)\1_{\{S_r\leq t\}}\Big)\\=\E\Big(\sum_{r\geq 1}V_{j-1}(t-S_r)\1_{\{S_r\leq t\}}\Big)^2
-V_j^2(t)~\sim~\frac{{\tt s}^2}{(2j-1)((j-1)!)^2 {\tt m}^{2j+1}}t^{2j-1}, \quad t\to\infty.
\end{multline}

\noindent {\sc Proof of \eqref{aux3}}. We shall use the equality (see formula (4.9) in \cite{Iksanov+Kabluchko:2018})
\begin{equation}\label{mom}
\E\bigg(\sum_{r\geq 1}V_{j-1}(t-S_r)\1_{\{S_r\leq t\}}\bigg)^2 = 2\int_{[0,\,t]}V_{j-1}(t-y)V_j(t-y){\rm
d}\tilde U(y)+\int_{[0,\,t]}V_{j-1}^2(t-y){\rm d}\tilde U(y).
\end{equation}
In view of \eqref{asymp1}
\begin{multline*}
\int_{[0,\,t]}V_{j-1}(t-y)V_j(t-y){\rm d}\tilde U(y)=\int_{[0,\,t]}\Big(\frac{(t-y)^{j-1}}{(j-1)!{\tt m}^{j-1}}+\frac{b_V(j-1)(t-y)^{j-2}}{(j-2)!{\tt m}^{j-2}}+o((t-y)^{j-2})\Big)\\\times \Big(\frac{(t-y)^j}{j!{\tt m}^j}+\frac{b_Vj(t-y)^{j-1}}{(j-1)!{\tt m}^{j-1}}+o((t-y)^{j-1})\Big){\rm d}\tilde U(y)=\frac{1}{(j-1)!j!{\tt m}^{2j-1}}\int_{[0,\,t]}(t-y)^{2j-1}{\rm d}\tilde U(y)\\+\frac{b_V(2j^2-2j+1)}{(j-1)!j!{\tt m}^{2j-2}}\int_{[0,\,t]}(t-y)^{2j-2}{\rm d}\tilde U(y)+\int_{[0,\,t]}o((t-y)^{2j-2}){\rm d}\tilde U(y),
\end{multline*}
where $b_V=\me \xi^2/(2{\tt m}^2)-1$ because under the present assumption $\me \eta={\tt m}$. According to \eqref{inter3}
\begin{equation*}
\int_{[0,\,t]}(t-y)^{2j-1} {\rm d}\tilde U(y)=\frac{t^{2j}}{2j{\tt m}}+b_V t^{2j-1}+o(t^{2j-1}),\quad t\to\infty;
\end{equation*}
\begin{equation*}
\int_{[0,\,t]}(t-y)^{2j-2} {\rm d}\tilde U(y)=\frac{t^{2j-1}}{(2j-1){\tt m}}+o(t^{2j-1}),\quad t\to\infty.
\end{equation*}
Also, it can be checked that
\begin{equation*}
\int_{[0,\,t]}o((t-y)^{2j-2}){\rm d}\tilde U(y)=o(t^{2j-1}),\quad t\to\infty.
\end{equation*}
By Proposition \ref{elem}, $$V_{j-1}^2(t)~\sim~ \frac{t^{2j-2}}{((j-1)!)^2 {\tt m}^{2j-2}}\quad\text{and}\quad V(t)=\tilde U(t)~\sim~\frac{t}{{\tt m}},\quad t\to\infty.$$ As in the proof of Proposition \ref{elem} we now invoke Karamata's Tauberian theorem (Theorem 1.7.1 in \cite{Bingham+Goldie+Teugels:1989}) to obtain $$\int_{[0,\,t]}V_{j-1}^2(t-y){\rm d}\tilde U(y)~\sim~\frac{t^{2j-1}}{(2j-1)((j-1)!)^2{\tt m}^{2j-1}},\quad t\to\infty.$$ Using the aforementioned asymptotic relations and recalling \eqref{mom} we infer
\begin{multline*}
\E\bigg(\sum_{r\geq 1}V_{j-1}(t-S_r)\1_{\{S_r\leq t\}}\bigg)^2=\frac{t^{2j}}{(j!)^2{\tt m}^{2j}}+\frac{2b_Vt^{2j-1}}{(j-1)!j!{\tt m}^{2j-1}}+\frac{2b_V(2j^2-2j+1)t^{2j-1}}{(2j-1)(j-1)!j!{\tt m}^{2j-1}}\\+\frac{t^{2j-1}}{(2j-1)((j-1)!)^2{\tt m}^{2j-1}}+o(t^{2j-1}),\quad t\to\infty.
\end{multline*}
Further, as $t\to\infty$,
$$V_j^2(t)=\frac{t^{2j}}{(j!)^2{\tt m}^{2j}}+\frac{2t^j}{j!{\tt m}^j}\bigg(V_j(t)-\frac{t^j}{j!{\tt m}^j}\bigg)+\bigg(V_j(t)-\frac{t^j}{j!{\tt m}^j}\bigg)^2=
\frac{t^{2j}}{(j!)^2{\tt m}^{2j}}+\frac{2b_Vt^{2j-1}}{((j-1)!)^2 {\tt m}^{2j-1}}+ o(t^{2j-1})$$
having utilized \eqref{asymp1}. The last two asymptotic relations
entail \eqref{aux3}.

With \eqref{aux3} at hand we are ready to prove \eqref{ineq3}. To this end, we shall use the mathematical induction. If
$j=1$, \eqref{ineq3} takes the form $D_1(t)=\E (N(t)-\tilde U(t))^2\sim {\tt s}^2 {\tt m}^{-3} t$ as $t\to\infty$. This can be checked along the lines of the proof of \eqref{aux3}. Alternatively, this relation follows from Theorem 3.8.4 in \cite{Gut:2009} where the assumption that the distribution of $\xi$ is nonlattice is not made. Assume that \eqref{ineq3} holds for $j=k-1\geq 1$, that is,
\begin{equation*}
D_{k-1}(t)~\sim~ \frac{{\tt s}^2}{(2k-3)((k-2)!)^2 {\tt m}^{2k-1}}t^{2k-3},\quad t\to\infty.
\end{equation*}
Using this and the equality
\begin{equation*}
\E (N_{k,1}(t))^2=\E\sum_{r\geq 1}D_{k-1}(t-S_r)\1_{\{S_r\leq t\}}=\int_{[0,\,t]}D_{k-1}(t-y){\rm d}\tilde U(y)
\end{equation*}
in combination with Karamata's Tauberian theorem (Theorem 1.7.1 in \cite{Bingham+Goldie+Teugels:1989}) or, even simpler, a bare hands calculation we infer
\begin{equation*}
\E (N_{k,\,1}(t))^2~\sim~\frac{{\tt s}^2}{(2k-3)(2k-2)((k-2)!)^2 {\tt m}^{2k}} t^{2k-2},\quad t\to\infty.
\end{equation*}
By virtue of \eqref{aux5} and \eqref{aux3} we conclude that
\eqref{ineq3} holds for $j=k$. The proof of Theorem \ref{thm:variance} is complete.

\subsection{Proof of Theorem \ref{strong}}
We use the mathematical induction. When $j=1$, \eqref{slln} holds true by formula (24) in \cite{Alsmeyer+Iksanov+Marynych:2017}. Assuming that it holds for $j=k$ we intend to show that \eqref{slln} also holds for $j=k+1$. To this end, we write with the help of \eqref{basic1232} for $j=k+1$:
\begin{equation}\label{x5}
N_{k+1}(t)=\sum_{r\geq 1}\big(N^{(r)}_{1,\,k+1}(t-T_r^{(k)})-V(t-T_r^{(k)})\big)\1_{\{T_r^{(k)}\leq t\}}+\int_{[0,\,t]}V(t-y){\rm d}N_k(y),\quad t\geq 0.
\end{equation}
By Proposition \ref{elem}, given $\varepsilon>0$ there exists $t_0>0$ such that $|t^{-1} V(t)-{\tt m}^{-1}|\leq \varepsilon$ whenever $t\geq t_0$. We have $$\int_{(t-t_0,\,t]}V(t-y){\rm d}N_k(y)\leq V(t_0)(N_k(t)-N_k(t-t_0))=o(t^k)\quad \text{a.s.~~as}\quad t\to\infty$$ by the induction assumption. Analogously, $$\int_{(t-t_0,\,t]}(t-y) {\rm d}N_k(y)=o(t^k)\quad \text{a.s.~~as}\quad t\to\infty.$$
Further,
\begin{multline*}
\int_{[0,\,t-t_0]}V(t-y){\rm d}N_k(y)\geq ({\tt m}^{-1}-\varepsilon)\int_{[0,\,t-t_0]}(t-y){\rm d}N_k(y)\geq ({\tt m}^{-1}-\varepsilon)\Big(\int_{[0,\,t]}(t-y){\rm d}N_k(y)\\-\int_{(t-t_0,\,t]}(t-y){\rm d}N_k(y)\Big).
\end{multline*}
Using $\int_{[0,\,t]}(t-y){\rm d}N_k(y)=\int_0^t N_k(y){\rm d}y$ and applying L'H\^{o}pital's rule in combination with the induction assumption we infer $$\lim_{t\to\infty}\frac{\int_{[0,\,t]}(t-y){\rm d}N_k(y)}{t^{k+1}}=\frac{1}{{\tt m}^k(k+1)!}\quad\text{a.s.}$$ Combining pieces together we arrive at $${\lim\inf}_{t\to\infty}\frac{\int_{[0,\,t]}V(t-y){\rm d}N_k(y)}{t^{k+1}}\geq \frac{1}{{\tt m}^{k+1}(k+1)!}\quad\text{a.s.}$$ The converse inequality for the limit superior follows similarly, whence
\begin{equation}\label{aux234}
\lim_{t\to\infty}\frac{\int_{[0,\,t]}V(t-y){\rm d}N_k(y)}{t^{k+1}}=\frac{1}{{\tt m}^{k+1}(k+1)!}\quad\text{a.s.}
\end{equation}

Further,
\begin{multline*}
\me \Big(\sum_{r\geq 1}\big(N^{(r)}_{1,\,k+1}(t-T_r^{(k)})-V(t-T_r^{(k)})\big)\1_{\{T_r^{(k)}\leq t\}}\Big)^2=\sum_{r\geq 1}\me \big(N^{(r)}_{1,\,k+1}(t-T_r^{(k)})-V(t-T_r^{(k)})\big)^2\1_{\{T_r^{(k)}\leq t\}}\\\leq \int_{[0,\,t]}\me (N(t-y))^2{\rm d}V_k(y)\leq \me (N(t))^2 V_k(t)=O(t^{k+2}),\quad t\to\infty
\end{multline*}
having utilized monotonicity of $t\mapsto \me (N(t))^2$ for the last inequality and Lemmas \ref{elem} and \ref{second} for the last equality. Invoking now the Markov inequality and the Borel-Cantelli lemma we conclude that
\begin{equation}\label{along}
\lim_{n\to\infty}\frac{\sum_{r\geq 1}\big(N^{(r)}_{1,\,k+1}(n^2-T_r^{(k)})-V(n^2-T_r^{(k)})\big)\1_{\{T_r^{(k)}\leq n^2\}}}{n^{2(k+1)}}=0\quad\text{a.s.}
\end{equation}
($n$ approaches $\infty$ along integers). This together with \eqref{aux234} yields
$$\lim_{n\to\infty}\frac{N_{k+1}(n^2)}{n^{2(k+1)}}=\frac{1}{{\tt m}^{k+1}(k+1)!}\quad\text{a.s.}$$ Thus, it remains to show that we may pass to the limit continuously. To this end, note that, for each $t\ge 0$, there exists $n\in\mn$ such that
$t\in [(n-1)^2, n^2)$ and use a.s.\ monotonicity of
$N_{k+1}$ to obtain
$$\frac{(n-1)^{2(k+1)}}{n^{2(k+1)}}\frac{N_{k+1}((n-1)^2)}{(n-1)^{2(k+1)}}\leq \frac{N_{k+1}(t)}{t^{k+1}}\leq \frac{N_{k+1}(n^2)}{n^{2(k+1)}}\frac{n^{2(k+1)}}{(n-1)^{2(k+1)}}\quad\text{a.s.}$$
Letting $t$ tend to $\infty$ we arrive at $$\lim_{t\to\infty}\frac{N_{k+1}(t)}{t^{k+1}}=\frac{1}{{\tt m}^{k+1}(k+1)!}\quad\text{a.s.},$$ thereby completing the induction step. The proof of Theorem \ref{strong} is complete.

\subsection{Proof of Theorem \ref{func2}}

In the case $\me \eta<\infty$ this result follows from Theorem 3.2 and Lemma 4.2 in \cite{Gnedin+Iksanov:2020}. Thus, we concentrate on the case $\me \eta^a<\infty$ for $a\in (0,1)$ and $\me\eta=\infty$.

We are going to apply Theorem \ref{main0} with $N^\ast_j=N_j$ for $j\in\mn$. According to \eqref{lord3},
\begin{equation*}
-c_1-c_2 t^{1-a}\leq V(t)-{\tt m}^{-1}t\leq c_U,\quad t\geq 0
\end{equation*}
for some positive constants $c_1$ and $c_2$ and $c_U={\tt m}^{-2} \me\xi^2$, that is, condition \eqref{1} holds with $c={\tt m}^{-1}$, $\omega=1$, $\varepsilon_1=a$, $\varepsilon_2=1$, $a_0+a_1=c_U$, $b_0=-c_1$, $b_1=-c_2$.
By Proposition \ref{suprem},
\begin{equation*}
\me \sup_{s\in [0,\,t]}(N(s)-V(s))^2=O(t),\quad t\to\infty,
\end{equation*}
that is, condition \eqref{vari} holds with $\gamma=1/2$. By Proposition \ref{prConvN},
\begin{equation*}
\Big(\frac{N(ut)-V(ut)}{\sqrt{{\tt m}^{-3}
{\tt s}^2 t}}\Big)_{u\geq 0}~\Rightarrow~(B(u))_{u\geq 0},\quad t\to\infty
\end{equation*}
in the $J_1$-topology on $D$. This means that condition \eqref{flt_assumption} holds with $\gamma=1/2$, $b={\tt m}^{-3/2}
{\tt s}$ and $W=B$, a Brownian motion. Recall that the process $B$ is locally H\"{o}lder continuous with exponent $\beta$ for any $\beta\in (0,1/2)$. Thus, by Theorem \ref{main0}, relation \eqref{relFunc2} is a specialization of \eqref{relation_main} with $\gamma=1/2$, $\omega=1$, $R^{(1)}_j=B_{j-1}$, $j\in\mn$ and $\rho_j=1/({\tt m}^j j!)$, $j\in\mn_0$.

Now we prove the claim that the centering $V_j(ut)$ can be replaced with that given in \eqref{center}. We first note that the equality in \eqref{center} follows with the help of the mathematical induction in $k$ from the representation
\begin{multline*}
\me (t-R_i)^k\1_{\{R_i\leq t\}}=\int_{[0,\,t]}(t-y)^k {\rm d}\mmp\{R_i\leq y\}\\=k\int_0^t \int_{[0,\,s]}(s-y)^{k-1}{\rm d}\mmp\{R_i\leq y\}{\rm d}s,\quad i,k\in\mn,~~t\geq 0,
\end{multline*}
where $R_i:=\eta_1+\ldots+\eta_i$. Here, the first step of induction is justified by the equality $$\int_{[0,\,t]}(t-y) {\rm d}\mmp\{R_i\leq y\}=\int_0^t \mmp\{R_i\leq s\}{\rm d}s,\quad i\in\mn,~t\geq 0.$$
Further, we show that whenever $\me\xi^2<\infty$ irrespective of the distribution of $\eta$, for all $T>0$,
\begin{equation}\label{aux4}
\lim_{t\to\infty}t^{-(j-1/2)}\sup_{u\in [0,\,T]}|V_j(ut)-(j!{\tt m}^j)^{-1}\me (ut-R_j)^j\1_{\{R_j\leq ut\}}|=0.
\end{equation}
%where $R_j:=\eta_1+\ldots+\eta_j$.
To this end, we recall that, according to formula (4.4) in \cite{Buraczewski+Dovgay+Iksanov:2020} (we use the formula with $\eta=0$), $$U_j(t)-\frac{t^j}{j!{\tt m}^j}\leq \sum_{i=0}^{j-1}\binom{j}{i}\frac{c_U^{j-i}t^i}{i!{\tt m}^i},\quad t\geq 0,$$ where $c_U={\tt m}^{-2} \me\xi^2$. Using this and Lemma \ref{lefthand}, we obtain, for $u\in [0,T]$ and $t\geq 0$,
\begin{multline*}
\Big|V_j(ut)-\frac{\me (ut-R_j)^j\1_{\{R_j\leq ut\}}}{j!{\tt m}^j}\Big|=\int_{[0,\,ut]}\Big(U_j(ut-y)-\frac{(ut-y)^j}{j!{\tt m}^j}\Big){\rm d}\mmp\{R_j\leq y\}\\\leq \int_{[0,\,ut]}\sum_{i=0}^{j-1}\binom{j}{i}\frac{c_U^{j-i}(ut-y)^i}{i!{\tt m}^i}{\rm d}\mmp\{R_j\leq y\}\leq \sum_{i=0}^{j-1}\binom{j}{i}\frac{c_U^{j-i}(Tt)^i}{i!{\tt m}^i}=o(t^{j-1/2}),\quad t\to\infty
\end{multline*}
which proves \eqref{aux4}.

It remains to show that if $\me \eta^{1/2}<\infty$, then the centering $V_j(ut)$ can be replaced with $(ut)^j/(j! {\tt m}^j)$. To justify this, it suffices to check that
\begin{equation}\label{aux4444}
\lim_{t\to\infty}\frac{\sup_{u\in [0,\,T]}\big((ut)^j- j! \int_0^{ut}\int_0^{t_2}\ldots\int_0^{t_j}\mmp\{R_j\leq y\}{\rm d}y{\rm d}t_j\ldots{\rm d}t_2\big)}{t^{j-1/2}}=0.
\end{equation}
The numerator of the ratio under the limit on the left-hand side of \eqref{aux4444} is equal to
$$\sup_{u\in[0,\,T]} \int_0^{tu}\int_0^{t_2}\ldots\int_0^{t_j}\mmp\{R_j>y\}{\rm d}y{\rm d}t_j\ldots{\rm d}t_2= \int_0^{tT}\int_0^{t_2}\ldots\int_0^{t_j}\mmp\{R_j>y\}{\rm d}y{\rm d}t_j\ldots{\rm d}t_2.$$ Hence, we are left with showing that $$\lim_{t\to\infty}t^{-(j-1/2)}\int_0^t\int_0^{t_2}\ldots\int_0^{t_j}\mmp\{R_j>y\}{\rm d}y{\rm d}t_j\ldots{\rm d}t_2=0.$$
Assume that $\me\eta<\infty$, so that $\me R_j<\infty$ and thereupon $\lim_{t\to\infty} \int_0^t\mmp\{R_j>y\}{\rm d}y=\me R_j<\infty$. Then using L'Hospital's rule $(j-1)$-times we obtain $$\lim_{t\to\infty}\frac{\int_0^t\int_0^{t_2}\ldots\int_0^{t_j}\mmp\{R_j>y\}{\rm d}y{\rm d}t_j\ldots{\rm d}t_2}{t^{j-1/2}}=\frac{2^{j-1}}{1\cdot 3\cdot\ldots\cdot(2j-1)} \lim_{t\to\infty}\frac{\int_0^t\mmp\{R_j>y\}{\rm d}y}{t^{1/2}}=0.$$ Assume that $\me\eta=\infty$. Since $\me \eta^{1/2}<\infty$ is equivalent to $\me R_j^{1/2}<\infty$ we infer $\lim_{t\to\infty}t^{1/2}\mmp\{R_j>t\}=0$. With this at hand, using L'Hospital's rule $j$-times we infer $$\lim_{t\to\infty}\frac{\int_0^t\int_0^{t_2}\ldots\int_0^{t_j}\mmp\{R_j>y\}{\rm d}y{\rm d}t_j\ldots{\rm d}t_2}{t^{j-1/2}}=\frac{2^j}{1\cdot 3\cdot\ldots\cdot(2j-1)} \lim_{t\to\infty}t^{1/2}\mmp\{R_j>t\}=0.$$
The proof of Theorem \ref{func2} is complete.

\section{Appendix}

In this section we state several results borrowed from other sources. The first of these can be found in the proof of Lemma 7.3 in
\cite{Alsmeyer+Iksanov+Marynych:2017}.
\begin{lemma}\label{impo1}
Let $f:[0,\infty)\to [0,\infty)$ be a locally bounded function. Then, for any $l\in\mn$,
\begin{equation}\label{impo1-gen}
\E \bigg(\sum_{k\geq 0}f(t-S_k)\1_{\{S_k\leq t\}}\bigg)^l\leq
\bigg(\sum_{j=0}^{[t]}\sup_{y\in[j,\,j+1)}f(y)\bigg)^l\me
(\nu(1))^l,\quad t\geq 0,
\end{equation}
where $\nu(t)=\inf\{k\in\mn: S_k>t\}$.
\end{lemma}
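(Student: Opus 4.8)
The plan is to discretize $f$ on unit intervals, so that the sum is dominated pathwise by a linear combination of counts of renewal epochs of $(S_k)_{k\in\mn_0}$ in windows of length one, and then to estimate the $l$-th moment of that combination by reducing each window count to a single fresh copy of $\nu(1)$ through the strong Markov property.

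First I would set $g(j):=\sup_{y\in[j,\,j+1)}f(y)$ for $j\in\mn_0$, which is finite by local boundedness of $f$. If $S_k\leq t$, then $t-S_k\in[0,t]$, so $t-S_k\in[j,\,j+1)$ for a unique $j=j(k)\in\{0,\ldots,[t]\}$; equivalently $S_k\in(t-j-1,\,t-j]$, and then $f(t-S_k)\leq g(j)$. Writing $M_j:=\#\{k\in\mn_0:S_k\in(t-j-1,\,t-j]\}$ and noting that the windows $(t-j-1,\,t-j]$, $0\leq j\leq[t]$, are pairwise disjoint and together contain $[0,t]$ (so every epoch $S_k\leq t$, including $S_0=0$, is counted exactly once), I obtain the pathwise bound $\sum_{k\geq0}f(t-S_k)\1_{\{S_k\leq t\}}\leq\sum_{j=0}^{[t]}g(j)M_j$. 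Raising this to the power $l$, expanding the finite sum and taking expectations reduces the claim to the estimate $\E\prod_{i=1}^{l}M_{j_i}\leq\E(\nu(1))^l$ for every tuple $(j_1,\ldots,j_l)\in\{0,\ldots,[t]\}^l$, because summing this against $\prod_i g(j_i)$ over all such tuples yields exactly $\big(\sum_{j=0}^{[t]}g(j)\big)^l\E(\nu(1))^l$.

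The main step would then be to establish the pathwise domination $M_j\leq\nu^{(j)}(1)$, where $\nu^{(j)}(1)\od\nu(1)$. For fixed $j$, put $\tau_j:=\inf\{k\in\mn_0:S_k>t-j-1\}$, an a.s.\ finite stopping time because $S_k\to\infty$ a.s. By the strong Markov property, $(S_{\tau_j+r}-S_{\tau_j})_{r\geq0}$ is a zero-delayed random walk with the same increment law as $(S_r)_{r\geq0}$, so $\nu^{(j)}(1):=\#\{r\geq0:S_{\tau_j+r}-S_{\tau_j}\leq1\}\od\nu(1)$. Since $(S_k)$ is strictly increasing and $S_{\tau_j}>t-j-1$, the epochs with $S_k\in(t-j-1,\,t-j]$ are precisely those $k=\tau_j+r$ with $S_{\tau_j+r}-S_{\tau_j}\leq t-j-S_{\tau_j}$, and $t-j-S_{\tau_j}<1$; hence $M_j\leq\nu^{(j)}(1)$ on every path. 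The generalized H\"older inequality with all exponents equal to $l$ then gives $\E\prod_{i=1}^{l}M_{j_i}\leq\E\prod_{i=1}^{l}\nu^{(j_i)}(1)\leq\prod_{i=1}^{l}\big(\E(\nu^{(j_i)}(1))^l\big)^{1/l}=\E(\nu(1))^l$ (and if $\E(\nu(1))^l=\infty$ there is nothing to prove).

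The step I expect to require the most care is the verification of $M_j\leq\nu^{(j)}(1)$ together with the bookkeeping around the half-open windows, in particular the degenerate cases $t-j-1<0$ (where $\tau_j=0$ and $\nu^{(j)}(1)=\nu(1)$) and $S_{\tau_j}>t-j$ (where $M_j=0$); these are routine but should be checked explicitly. Everything else — the discretization, the expansion of the $l$-th power, and the applications of the strong Markov property and H\"older's inequality — is mechanical.
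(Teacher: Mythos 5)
Your proof is correct. The paper does not prove Lemma \ref{impo1} itself---it refers to the proof of Lemma 7.3 in \cite{Alsmeyer+Iksanov+Marynych:2017}---and your argument is essentially that same standard one: discretize into unit windows, dominate each window count pathwise by a fresh copy of $\nu(1)$ via the strong Markov property (equivalently, distributional subadditivity of $\nu$), and then combine moments, the only cosmetic difference being that you expand the $l$-th power and apply the generalized H\"older inequality where one may equivalently use Minkowski's inequality in $L_l$.
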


For $j\in\mn$ and $t\geq 0$, denote by $N^\ast_j(t)$ the number of the $j$th generation individuals with birth times $\leq t$ in a general branching process generated by an arbitrary locally finite point process $T^\ast$, and put $V^\ast_j(t):=\me N^\ast_j(t)$. In particular, $N^\ast_j=N_j$ for $j\in\mn$ when $T^\ast=T$. For notational simplicity, put $N^\ast:=N^\ast_1$ and $V^\ast:=V_1^\ast$.

Let $W:=(W(s))_{s\geq 0}$ denote a centered Gaussian process which is
a.s.\ locally H\"{o}lder continuous and
satisfy $W(0)=0$. For each $u>0$, put
$$R^{(u)}_1(s):=W(s),\quad R^{(u)}_j(s):=\int_{[0,\,s]}(s-y)^{u(j-1)}{\rm
d}W(y),\quad s\geq 0,~ j\geq 2.$$ The following result follows from Theorem 3.2 in \cite{Gnedin+Iksanov:2020} and its proof.
\begin{thm}\label{main0}
Assume the following conditions hold:
\begin{itemize}
\item[\rm(i)]
\begin{equation}\label{1}
b_0+b_1 t^{\omega-\varepsilon_1}\leq V^\ast(t)-c t^\omega\leq
a_0+a_1t^{\omega-\varepsilon_2}
\end{equation}
for all $t\geq 0$ and some constants $c,\omega, a_0, a_1>0$, $0<\varepsilon_1,
\varepsilon_2\leq \omega$ and $b_0, b_1\in\mr$,
\item[\rm(ii)]
\begin{equation}\label{vari}
\me \sup_{s\in [0,\,t]}(N^\ast(s)-V^\ast(s))^2=O(t^{2\gamma}),\quad
t\to\infty
\end{equation}
for some $\gamma\in (0, \omega)$.
\item[\rm(iii)]
\begin{equation}\label{flt_assumption}
\Big(\frac{N^\ast(ut)-V^\ast(ut)}{bt^\gamma}\Big)_{u\geq 0}~\Rightarrow~
(W(u))_{u\geq 0},\quad t\to\infty
\end{equation}
in the $J_1$-topology on $D$ for some $b>0$ and the same $\gamma$
as in \eqref{vari}.
\end{itemize}
Then
\begin{equation}\label{relation_main}
\bigg(\Big(\frac{N^\ast_j(ut)-V^\ast_j(ut)}{b\rho_{j-1}
t^{\gamma+\omega(j-1)}}\Big)_{u\geq 0}\bigg)_{j\in\mn}~\Rightarrow~\big(\big(R^{(\omega)}_j(u)\big)_{u\geq 0}\big)_{j\in\mn}
\end{equation}
in the $J_1$-topology on $D^\mn$, where
\begin{equation*}%\label{defcj}
\rho_j:=\frac{(c\Gamma(\omega+1))^j}{\Gamma(\omega j+1)},\quad j\in\mn_0
\end{equation*}
with $\Gamma(\cdot)$ denoting the gamma function.
\end{thm}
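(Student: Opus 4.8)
The final statement to prove is Theorem~\ref{main0}, which is attributed to Theorem 3.2 in \cite{Gnedin+Iksanov:2020} and its proof. Since the statement is essentially a restatement (with slightly adjusted hypotheses allowing the polynomial two-sided bound \eqref{1} in place of a simpler one), the plan is to reduce to the cited result by an induction on the generation number $j$, mirroring the structure of the convolution identity \eqref{convol}.

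\textbf{Overall strategy.} The plan is to proceed by induction on $j$, using the branching decomposition
$$N^\ast_j(t)=\sum_{r\geq 1}\big(N^{\ast(r)}_{j-1}(t-T^\ast_r)-V^\ast_{j-1}(t-T^\ast_r)\big)\1_{\{T^\ast_r\leq t\}}+\bigg(\sum_{r\geq 1}V^\ast_{j-1}(t-T^\ast_r)\1_{\{T^\ast_r\leq t\}}-V^\ast_j(t)\bigg),$$
analogous to \eqref{defNj}. The base case $j=1$ is precisely hypothesis \eqref{flt_assumption}. For the inductive step, one shows that after normalization by $b\rho_{j-1}t^{\gamma+\omega(j-1)}$ the first ("fluctuation") sum is asymptotically negligible — its variance is $O(t^{2\gamma+2\omega(j-2)}\cdot t^\omega)$ by \eqref{vari} applied to the $(j-1)$th generation and the elementary renewal asymptotics $V^\ast_{j-1}(t)=O(t^{\omega(j-1)})$ following from \eqref{1}, which is of strictly smaller order than the square of the normalization — while the second sum, a functional of the point process $T^\ast$ integrated against the inductively controlled fluctuations of the $(j-1)$th generation, carries the Gaussian limit. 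The key analytic input is that $\int_{[0,t]}(\cdot)\,{\rm d}V^\ast(y)$ acts on a function behaving like $s\mapsto s^{\gamma+\omega(j-2)}W(\cdot)$ and, because $V^\ast(t)\sim ct^\omega$ with the error controlled by \eqref{1}, produces the kernel $(s-y)^{\omega(j-1)}$ up to the constant $c\Gamma(\omega+1)/\Gamma(\omega(j-1)+1)\cdot\ldots$, which is exactly how the $\rho_j$ telescope; this is a continuous-mapping / approximation argument in the $J_1$-topology.

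\textbf{Key steps in order.} First, record the elementary renewal bound: from \eqref{1} one gets $|V^\ast_j(t)-\rho_j t^{\omega j}|=O(t^{\omega j-\min(\varepsilon_1,\varepsilon_2)})$ by induction via \eqref{convol}, and in particular $V^\ast_j(t)\sim \rho_j t^{\omega j}$. Second, establish the variance estimate $\me\sup_{s\in[0,t]}(N^\ast_j(s)-V^\ast_j(s))^2=O(t^{2\gamma+2\omega(j-1)})$ by induction, bounding the fluctuation sum through Lemma~\ref{impo1} (or its $l=2$ case) together with $\me\sup_{s\le t}(N^\ast_{j-1}(s)-V^\ast_{j-1}(s))^2=O(t^{2\gamma+2\omega(j-2)})$, and bounding the second sum by a moment computation like \eqref{mom}. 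Third, prove the one-dimensional-distribution (indeed finite-dimensional) convergence: express $\sum_r V^\ast_{j-1}(ut-T^\ast_r)\1_{\{T^\ast_r\le ut\}}-V^\ast_j(ut)$ as $\int_{[0,ut]}(N^\ast_{j-1}(ut-y)-V^\ast_{j-1}(ut-y))\,{\rm d}(\text{renewal-type measure})$ after an integration-by-parts / Fubini manipulation, substitute the inductive weak limit, and identify the limit $\int_{[0,u]}(u-y)^{\omega(j-1)}\,{\rm d}W(y)=R^{(\omega)}_j(u)$ using that the renewal measure has density asymptotically $c\omega y^{\omega-1}$ in the appropriate integrated sense. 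Fourth, upgrade to functional convergence in the product $J_1$-topology on $D^\mn$: joint tightness of all coordinates follows from the variance bounds of the second step together with the Billingsley-type moment criterion, and the limiting vector $(R^{(\omega)}_j)_{j\in\mn}$ is measurable with respect to the single Brownian-type process $W$, so the joint law is pinned down by the finite-dimensional limits.

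\textbf{Main obstacle.} The hardest part is the third step: showing that the integration of the inductively known fluctuation process $N^\ast_{j-1}-V^\ast_{j-1}$ against the renewal-type measure generated by $T^\ast$ converges, in $J_1$, to the Riemann--Liouville-type integral of $W$ with the correct constant. The difficulty is twofold — one must justify passing the weak limit through the integral operator (a continuous-mapping argument that requires care because the integrator depends on $t$ and the map $f\mapsto\int_0^\cdot f\,{\rm d}(\text{measure})$ is only continuous on suitable subsets of $D$), and one must pin down the constant $\rho_{j-1}$ by controlling the error in \eqref{1} with enough precision that the lower-order terms $b_0,b_1 t^{\omega-\varepsilon_1}$ and $a_0,a_1 t^{\omega-\varepsilon_2}$ do not contaminate the leading-order limit after normalization by $t^{\gamma+\omega(j-1)}$ — this is exactly where the hypotheses $\varepsilon_1,\varepsilon_2\le\omega$ and $\gamma<\omega$ are used. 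Since the full argument is carried out in \cite{Gnedin+Iksanov:2020}, it suffices to verify that our hypotheses imply theirs and that the proof there goes through verbatim with $\omega$ in place of $1$; the present excerpt's Proposition~\ref{suprem} and the lemma yielding \eqref{lord3} supply precisely the inputs needed to invoke it.
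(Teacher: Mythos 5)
Your approach coincides with the paper's: the paper gives no proof of Theorem \ref{main0} at all, stating in the Appendix that it follows from Theorem 3.2 of \cite{Gnedin+Iksanov:2020} and its proof, and your plan --- verify that hypotheses (i)--(iii) feed into that result and reconstruct its induction over generations (negligible conditional-fluctuation term via the sup-variance bound, transfer of the base functional limit through convolution with the renewal-type measure, Beta-function telescoping producing $\rho_{j-1}$ and $R^{(\omega)}_j$) --- is exactly that route. One small internal slip worth fixing: the identity in your third step, writing the compensator term as $\int_{[0,\,ut]}(N^\ast_{j-1}(ut-y)-V^\ast_{j-1}(ut-y))\,{\rm d}V^\ast(y)$, belongs to the decomposition over the $(j-1)$th generation (the second equality in \eqref{basic1232}), whereas the decomposition you display at the outset is over the first generation, for which the corresponding term is instead $\int_{[0,\,ut]}(N^\ast(ut-y)-V^\ast(ut-y))\,{\rm d}V^\ast_{j-1}(y)$; either version works (and yields the same constant $b\rho_{j-1}$), but as written the sketch mixes the two.
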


\vspace{5mm}

\noindent {\bf Acknowledgement}. The present work was supported by the National Research Foundation of Ukraine (project 2020.02/0014 ``Asymptotic regimes of perturbed random walks: on the edge of modern and classical probability'').


\begin{thebibliography}{99}

\footnotesize

\bibitem{Alsmeyer+Iksanov+Marynych:2017} G. Alsmeyer, A. Iksanov and A. Marynych, \textit{Functional limit theorems for the number of occupied boxes in the Bernoulli sieve}. Stoch. Proc. Appl. \textbf{127} (2017), 995--1017.

\bibitem{Bingham+Goldie+Teugels:1989} N.~H. Bingham, C.~M. Goldie and J.~L. Teugels, \textit{Regular variation}. Cambridge University Press,
1989.

\bibitem{Bohun etal:2021} V. Bohun, A. Iksanov, A. Marynych and B. Rashytov, \textit{Renewal theory for iterated perturbed random walks on a general branching process tree: intermediate generations}. Preprint (2020) available at {\tt https://arxiv.org/abs/2012.03341}

\bibitem{Buraczewski+Dovgay+Iksanov:2020} D. Buraczewski, B. Dovgay and A. Iksanov, \textit{On intermediate levels of nested occupancy scheme in random environment generated by stick-breaking I}. Electron. J. Probab. \textbf{25}, paper no. 123, 24 pp.

\bibitem{Gnedin+Iksanov:2020} A. Gnedin and A. Iksanov, \textit{On nested infinite occupancy scheme in random environment}. Probab. Theory Relat. Fields. \textbf{177} (2020), 855--890.

\bibitem{Gut:2009} A. Gut, \textit{Stopped random walks. Limit theorems and applications}. 2nd Edition, Springer, 2009.

\bibitem{Iksanov+Kabluchko:2018} A. Iksanov and Z. Kabluchko, \textit{A functional limit theorem for the profile of random
recursive trees}. Electron. Commun. Probab. \textbf{23} (2018), paper no. 87, 13 pp.

\bibitem{Mitov+Omey:2014} K.~V. Mitov and E. Omey, \textit{Renewal processes}. Springer, 2014.

\bibitem{Resnick+Rootzen:2000} S. Resnick and H. Rootz\'{e}n, \textit{Self-similar communication models and very
heavy tails}. Ann. Appl. Probab. \textbf{10} (2000), 753--778.
\end{thebibliography}
\end{document}